\documentclass[12pt]{amsart}
\usepackage[lite]{amsrefs} 
\usepackage{amssymb,amsmath,enumitem}
\usepackage{graphicx}
%
%

\newcommand{\R}{\mathbb{R}}

\newcommand{\W}{{\bf{W}}}
\newcommand{\K}{{\bf{K}}}
\newcommand{\I}{{\bf{I}}}

\numberwithin{equation}{section}
\usepackage{hyperref}
\hypersetup{colorlinks=true, citecolor=blue, linkcolor=red, urlcolor=blue}
\theoremstyle{plain}
\newtheorem{Thm}{Theorem}[section]
\newtheorem{Cor}[Thm]{Corollary}
\newtheorem{Lem}[Thm]{Lemma}

\theoremstyle{definition}

\newtheorem{Rem}[Thm]{Remark}

\theoremstyle{remark}

\begin{document}
\title[Bilateral estimates of solutions]
{Bilateral estimates of solutions\\ to  quasilinear elliptic equations\\ with sub-natural growth terms}

\author{I. E. Verbitsky}
\address{Department of Mathematics, University of Missouri,  Columbia,   \newline 
Missouri 65211, USA}
\email{\href{mailto:verbitskyi@missouri.edu}{verbitskyi@missouri.edu}}
\begin{abstract} We study 
 quasilinear  elliptic equations  of the type 
$
-\Delta_{p} u  = \sigma u^{q}  + \mu  \; \; \text{in} \;\; \R^n  
$ 
in the 
case $0<q< p-1$, where $\mu$ and  $\sigma$ are nonnegative measurable functions, or  locally finite measures,  and  $\Delta_{p}u= \text{div}(|\nabla u|^{p-2}\nabla u)$ is the $p$-Laplacian. Similar equations with 
more general local and nonlocal operators in place of $\Delta_{p}$ 
are treated as well.  

We obtain existence criteria and global bilateral pointwise estimates for all positive solutions $u$:  
$$ u(x) \approx   (\W_p \sigma(x))^{\frac{p-q}{p-q-1}} + \K_{p,q} \sigma(x) + \W_p \mu (x), \quad x \in \R^n,$$
where $\W_p$ and $ \K_{p, q} $ are, respectively, the Wolff potential and the intrinsic Wolff potential, with the constants of equivalence depending only on   $p$, $q$ and $n$. 

The contributions of $\mu$ and $\sigma$ in these pointwise estimates 
are totally separated,  which 
is a new phenomenon even when $p=2$.  In the homogeneous case $\mu=0$,  such estimates  
 were obtained earlier by a different method  only for  minimal positive solutions. 
  
\end{abstract}
\subjclass[2010]{Primary 35J92, 42B37; Secondary 35J20.} 
\keywords{Wolff potentials, $p$-Laplacian, pointwise estimates}

\maketitle
\tableofcontents
\section{Introduction}\label{sect:intro}

We present  a new approach to pointwise estimates of solutions 
 to   quasilinear elliptic equations of the type 
 \begin{equation}\label{inhom}
\begin{cases}
-\Delta_{p} u  = \sigma u^{q} + \mu, \quad u\ge 0  \quad \text{in} \;\; \R^n, \\
\liminf\limits_{x\rightarrow \infty}u(x) = 0,  
\end{cases}
\end{equation}
where $\mu$, $\sigma\ge 0$ are locally 
integrable functions, or Radon measures (locally finite)  in $\R^n$, in the 
\textit{sub-natural growth} case $0<q<p-1$. 

In this paper, all solutions $u$ (possibly unbounded)  are understood to be  $p$-superharmonic (or equivalently locally renormalized)  solutions   (see \cite{KKT}). We will  assume that  
 $u\in L^{q}_{{\rm loc}}(\R^n, d\sigma)$, so that the right-hand side of \eqref{inhom} is a Radon measure.

We will obtain matching upper and lower estimates of solutions in terms of nonlinear potentials 
defined below. Our estimates hold for all $p$-superharmonic solutions $u$. In particular, they yield an existence criterion for solutions to  \eqref{inhom}.

In the special case $\mu=0$, i.e., 
\begin{equation}\label{q-hom}
\begin{cases}
-\Delta_{p} u  = \sigma u^{q}, \quad u\ge 0  \quad \text{in} \;\; \R^n, \\
\liminf\limits_{x\rightarrow \infty}u(x) = 0,  
\end{cases}
\end{equation}
considered earlier in \cite{CV1}, the upper pointwise estimate was obtained only for the 
minimal solution $u$. Our proofs are new even in this case. 

When $p=2$ and $0<q<1$, these sublinear elliptic equations were studied by Brezis and Kamin   \cite{BK} (see also \cite{CV2}, \cite{SV}, \cite{QV}, \cite{V}, 
and the literature cited there).

The case $q\ge p-1$,  
which comprises Schr\"{o}dinger type equations with \textit{natural growth} terms 
when $q=p-1$, and  equations of superlinear type when $q>p-1$, is quite different  in nature 
(see, for instance,   
\cite{JMV}, \cite{JV}, \cite{PV1}, \cite{PV2}).

We observe that in general, for the existence of a nontrivial solution $u$ to 
\eqref{inhom},  $\sigma$ must be absolutely continuous with respect to $p$-capacity, i.e., 
$\sigma(K)=0$ whenever $\text{cap}_p (K)=0$, for any compact set $K$ in $\R^n$. Here the $p$-capacity of $K$ is defined by 
\begin{equation}\label{p-capacity}
\text{cap}_p (K)=\inf \left\{\int_{\R^n} |\nabla u|^p dx: \, \,  u \ge 1 \, \, \text{on} \, \, K, \quad u \in C^\infty_0(\R^n)\right \}.
\end{equation}

More precisely, if $u$ is a nontrivial (super)solution  to \eqref{q-hom} in the case $0<q\le p-1$, then  (see \cite{CV1}*{Lemma 3.6} for a more general estimate)  
\begin{equation}\label{abs-cap0}
\sigma(K)\le \text{cap}_p(K)^{\frac{q}{p-1}}\left(\int_K u^q d\sigma\right)^{\frac{p-1-q}{p-1}},
\end{equation}
for all compact sets $K\subset \R^n$.

 Among our main tools are certain \textit{nonlinear potentials} associated with \eqref{q-hom}. We refer to the recent survey of nonlinear potentials and their applications to PDE by Kuusi and Mingione \cite{KuMi}. 
 
 Let $ M^{+}(\R^n)$ denote the class of all positive (locally finite) Radon measures 
on $\R^n$. 
Given  a measure 
$\sigma \in M^{+}(\R^n)$, 
 $1<p<\infty$ and 
$0<\alpha<\frac{n}{p}$, 
the Havin-Maz'ya-Wolff potential, introduced in \cite{HM} (see also \cite{HeWo}),   is defined by 
\begin{equation}\label{wolff-pq}
\W_{\alpha, p}\sigma (x) = \int_{0}^{\infty} \left[ \frac{\sigma(B(x,t))}{t^{n-\alpha p}}\right]^{\frac{1}{p-1}}\; \frac{dt}{t}, \quad x \in \R^n,
\end{equation}
where $B(x,t)$ is a ball of radius $t >0$ centered at $x \in \R^n$. 

Nonlinear potentials $\W_{\alpha, p}\sigma$,  
often called 
Wolff potentials, occur in various problems of harmonic analysis, approximation theory, Sobolev spaces,  in particular spectral synthesis problems (\cite{AH},  \cite{HM},  \cite{HeWo}, \cite{Maz}), as well as  quasilinear (\cite{KiMa}, \cite{MZ}, \cite{PV1}) and fully nonlinear PDE  (\cite{Lab}, \cite{TW1}, \cite{TW2}). 

In the linear case $p=2$, clearly $\W_{\alpha, p}\sigma = \I_{2\alpha}\sigma$  (up to a constant multiple), where the Riesz potential of order $\beta\in (0, n)$ is defined by 
\[
\I_{\beta}\sigma(x)=\int_{\R^n} \frac{d \sigma (y)}{|x-y|^{n-\beta}}, \quad x \in \R^n.  
\]

In the special case $\alpha=1$, we will be using the notation $\W_p \sigma=\W_{1,p} \sigma$ ($1<p<n$), i.e., 
\begin{equation}\label{wolff-p}
\W_p \sigma (x) = \int_{0}^{\infty} \left[ \frac{\sigma(B(x,t))}{t^{n-p}}\right]^{\frac{1}{p-1}}\; \frac{dt}{t}, \quad x \in \R^n.
\end{equation}
 These potentials are  intimately related to 
 the equation 
\begin{equation}\label{p-lap}
\begin{cases}
-\Delta_{p} u = \sigma, \quad u \ge 0 \quad \text{in} \;\; \mathbb{R}^n, \\ 
\liminf\limits_{x \rightarrow \infty} u(x) = 0, 
\end{cases}
\end{equation}
where $\sigma \in M^{+}(\R^n)$.

The following important global estimate, along with its local counterpart,  is due to T.~Kilpel\"{a}inen and J.~Mal\'y 
\cite{KiMa}:  \textit{Suppose 
$u\ge 0$ is a $p$-superharmonic solution to \eqref{p-lap}. Then
\begin{equation}\label{k-m}
K^{-1}\W_p \sigma (x) \leq u(x) \leq K \W_p \sigma(x),
\end{equation} 
where $K=K(n,p)$ is a positive constant.}

It is known that  a nontrivial solution $u$  to \eqref{p-lap} 
exists if and only if 
\begin{equation}\label{k-m-cond}
\int_{1}^{\infty}  \left[ \frac{\sigma(B(0,t))}{t^{n-p}} \right]^{\frac{1}{p-1}}\frac{d t}{t}<\infty.
\end{equation} 
This is equivalent to $\W_p \sigma(x)<\infty$ for some  $x \in \R^n$, or 
equivalently quasi-everywhere (q.e.) on $\R^n$. 
In particular, \eqref{k-m-cond} may hold only in the case $1<p<n$, unless $\sigma= 0$.


The following bilateral pointwise estimates of nontrivial (minimal) solutions $u$ to 
\eqref{q-hom} in the  case $0<q<p-1$  are fundamental 
to our approach   (see \cite{CV1}, where the upper estimate was proved only for 
the minimal solution):
\begin{equation} \label{two-sided}
\begin{aligned}
c^{-1} & \left [(\W_p \sigma(x))^{\frac{p-1}{p-1-q}}+\K_{p, q}  \sigma(x)\right] 
\le u (x) \\ &\le c \left[(\W_p \sigma(x))^{\frac{p-1}{p-1-q}} +\K_{p, q}  \sigma(x) 
\right],  \quad x\in \R^n, 
\end{aligned}
\end{equation}
where $c>0$ is a constant which  depends only on $p$, $q$, and $n$. 

Here $\K_{p, q} \sigma$ is the so-called \textit{intrinsic} nonlinear potential 
associated with \eqref{q-hom}, which was introduced in \cite{CV1}.  
It is defined in terms of the  localized weighted norm inequalities,    
\begin{equation} \label{weight-lap}
\left(\int_{B} |\varphi|^q \, d \sigma\right)^{\frac 1 q} \le \varkappa(B) \,  ||\Delta_p \varphi||^{\frac{1}{p-1}}_{L^1(\R^n)},  
\end{equation} 
for all test functions $\varphi$ such that  $-\Delta_p \varphi \ge 0$, $\displaystyle{\liminf_{x \to \infty}} \, \varphi(x)=0$. Here $\varkappa(B)$ denotes the least constant in \eqref{weight-lap} 
associated with the measure $\sigma_B=\sigma|_B$ restricted to a ball 
$B=B(x, t)$.  
Then the \textit{intrinsic} nonlinear potential $\K_{p, q}\sigma$ is defined by 
\begin{equation} \label{potentialK}
\K_{p, q}  \sigma (x)  =  \int_0^{\infty} \left[\frac{ \varkappa(B(x, t))^{\frac{q(p-1)}{p-1-q}}}{t^{n- p}}\right]^{\frac{1}{p-1}}\frac{dt}{t}, \quad x \in \R^n.
\end{equation} 
As was noticed in \cite{CV1},  $\K_{p, q}  \sigma \not\equiv + \infty$ if and only if 
\begin{equation}\label{suffcond1}
\int_1^{\infty} \left[\frac{ \varkappa(B(0, t))^{\frac{q(p-1)}{p-1-q}}}{t^{n- p}}\right]^{\frac{1}{p-1}}\frac{dt}{t} < \infty.  
\end{equation}

Consequently, a nontrivial $p$-superharmonic solution $u$ to  \eqref{q-hom} exists if and only if both $\K_{p, q}  \sigma \not\equiv + \infty$ and 
$\W_p \sigma\not\equiv + \infty$, that is, both \eqref{k-m-cond} 
and \eqref{suffcond1} hold.

For the existence of a nontrivial solution to equation \eqref{inhom}, we need to add 
the condition  $\W_p \mu\not\equiv + \infty$, i.e.,
\begin{equation}\label{suff-cond}
\int_{1}^{\infty}  \left[\frac{\mu(B(0,t))}{t^{n-p}}\right]^{\frac{1}{p-1}} \frac{dt}{t} <\infty.  
\end{equation}

In this paper, we obtain the following the following criterion for existence, along with global bilateral estimates of solutions to \eqref{inhom}. 

\begin{Thm}\label{thm:main1}
Let $1<p<n$, $0<q<p-1$, and $\mu, \sigma \in M^{+}(\R^n)$. There exists a 
nontrivial solution $u$ to \eqref{inhom} if and only if conditions \eqref{k-m-cond}, 
\eqref{suffcond1}, and \eqref{suff-cond} hold. Then any nontrivial solution $u$  satisfies the estimates 
\begin{equation}\label{main-b1} 
\begin{aligned}
 C_1 \, & \left[  (\W_p \sigma(x))^{\frac{p-1}{p-1-q}} + \K_{p, q} \sigma(x)  + \W_p \mu (x)\right] \le u(x) \\ 
\le & C_2 \, \left[  (\W_p \sigma(x))^{\frac{p-1}{p-1-q}} + \K_{p, q} \sigma(x) + \W_p \mu (x)\right], \quad x \in \R^n. 
\end{aligned}
\end{equation}
where the positive constants $C_1, C_2$  depend only  on $p, q$, and $n$. 

If $n \leq p < \infty$, then there are no nontrivial solutions to \eqref{inhom}.  
\end{Thm}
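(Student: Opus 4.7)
The plan is to treat the lower bound, the upper bound, and the existence/non-existence claim as three separate tasks, leveraging the Kilpel\"ainen--Mal\'y estimate \eqref{k-m}, the homogeneous bilateral estimate \eqref{two-sided}, and the quasi-additivity of $\W_p$.

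For the \textbf{lower bound}, two independent comparisons suffice. Since $-\Delta_p u = \sigma u^q + \mu \ge \mu$, the comparison principle for $p$-superharmonic functions together with \eqref{k-m} yields $u(x) \ge c\,\W_p\mu(x)$. On the other hand, $u$ is a nontrivial $p$-superharmonic supersolution of $-\Delta_p u \ge \sigma u^q$, so the lower estimate in \eqref{two-sided}---whose proof in \cite{CV1} uses only the supersolution property---gives $u(x) \ge c\,[(\W_p \sigma(x))^{(p-1)/(p-1-q)} + \K_{p,q}\sigma(x)]$. Summing the two bounds finishes this half.

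The \textbf{upper bound} is the main difficulty, since in \cite{CV1} it was proved only for the minimal solution. Applying Kilpel\"ainen--Mal\'y to $-\Delta_p u = \sigma u^q + \mu$ and using the elementary inequality $(a+b)^{1/(p-1)} \le C(a^{1/(p-1)} + b^{1/(p-1)})$ to split the Wolff potential, one obtains $u \le C\,[\W_p(u^q\sigma) + \W_p\mu]$. The task thus reduces to proving
\[
\W_p(u^q\sigma)(x) \le C\bigl[(\W_p \sigma(x))^{(p-1)/(p-1-q)} + \K_{p,q}\sigma(x) + \W_p\mu(x)\bigr].
\]
Inserting the candidate bound for $u$ on the left produces three terms: $\W_p((\W_p\sigma)^{q(p-1)/(p-1-q)}\sigma)$, controlled by $(\W_p\sigma)^{(p-1)/(p-1-q)}$ via a standard Wolff-potential iteration; $\W_p((\K_{p,q}\sigma)^q\sigma)$, controlled by $\K_{p,q}\sigma$ through the defining weighted inequality \eqref{weight-lap} together with the formula \eqref{potentialK}; and the mixed term $\W_p((\W_p\mu)^q\sigma)$, which I would treat by a dyadic decomposition of the Wolff integral combined with the capacity estimate \eqref{abs-cap0}. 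The apparent circularity is broken by an approximation argument: truncate $\sigma$ and $\mu$ to compact balls, so the resulting solutions are bounded and the a priori estimate closes; then pass to the limit using monotonicity and local compactness of $p$-superharmonic functions.

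For \textbf{existence} under \eqref{k-m-cond}, \eqref{suffcond1}, and \eqref{suff-cond}, I would construct a solution by monotone iteration: starting from $u_0 = c\,\W_p\mu$, let $u_{k+1}$ be the Kilpel\"ainen--Mal\'y solution of $-\Delta_p u_{k+1} = \sigma u_k^q + \mu$; the uniform upper bound from the previous paragraph gives compactness, and the sub-natural growth $q<p-1$ is what makes the iteration dissipative. For \textbf{non-existence} when $p \ge n$, note that \eqref{k-m-cond} and \eqref{suff-cond} can hold only if $\sigma \equiv \mu \equiv 0$ in that regime. The \textbf{main obstacle} throughout is closing the upper-bound iteration for \emph{non-minimal} solutions; the new feature relative to \cite{CV1} is that the $\mu$-contribution enters purely additively as $\W_p\mu$, cleanly decoupled from the nonlinear $\sigma$-scale, and this separation is what ultimately allows the estimate to extend from minimal to arbitrary solutions.
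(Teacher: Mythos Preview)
Your lower bound, the existence iteration, and the $p\ge n$ non-existence all agree with the paper. The gap is in the upper bound for an \emph{arbitrary} solution, which is exactly the new content over \cite{CV1}. Your plan inserts the candidate bound into $\W_p(u^q\sigma)$ and controls three terms. Even granting those term-by-term estimates (and note that the mixed term $\W_p\bigl((\W_p\mu)^q\sigma\bigr)$ should be handled via the localized constants $\varkappa(B)$ of \eqref{weight-lap}, not via \eqref{abs-cap0}, which reintroduces $u$ on the right), what this actually shows is that the candidate expression is, up to a constant, a supersolution of $v\ge\W_p(v^q\sigma)+\W_p\mu$. That bounds the \emph{minimal} solution produced by iteration; it does not bound a given, possibly non-minimal, $u$. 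Truncating $\sigma,\mu$ to balls does not repair this: an arbitrary $u$ solving the original problem is not a solution of the truncated one, so the a~priori estimate for the truncated solutions never attaches to $u$, and no comparison principle for the sublinear integral equation is available to force it.

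The paper's device is self-closing on the given $u$. Setting $d\omega=u^q\,d\sigma+d\mu$ one has $u\le c\,\W_p\omega$, whence $\W_p\omega\le c\,\W_p\bigl[(\W_p\omega)^q\,d\sigma\bigr]+c\,\W_p\mu$. The key new estimate (Lemma~\ref{lemma-phi}) asserts that for \emph{any} $\nu$,
\[
\W_p\bigl[(\W_p\nu)^q\,d\sigma\bigr](x)\;\le\; C\,\bigl(\W_p\nu(x)\bigr)^{\frac{q}{p-1}}\Bigl[\W_p\sigma(x)+\bigl(\K_{p,q}\sigma(x)\bigr)^{\frac{p-1-q}{p-1}}\Bigr],
\]
proved by the local/tail splitting you allude to, but with $\varkappa(B)$ supplying the local piece. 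Applying it with $\nu=\omega$ leaves a factor $(\W_p\omega)^{q/(p-1)}$ on the right, and Young's inequality with exponents $\tfrac{p-1}{q}$ and $\tfrac{p-1}{p-1-q}$ absorbs it into the finite left-hand side, yielding the upper bound directly for the fixed $u$ without any circularity or approximation. The same lemma with $\nu=\mu$ is also precisely what controls your mixed term and produces the clean $\mu$--$\sigma$ decoupling.
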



The following corollary is deduced  from Theorem \ref{thm:main1}  under the additional assumption that there exists a constant $C=C(\sigma, p, n)$ so that 
\begin{equation}\label{cap-p}
\sigma(K)\le C\, \text{cap}_p(K), \quad \textrm{for all compact sets} \,  \, 
K \subset \mathbb{R}^n.
\end{equation}
We remark that condition  \eqref{cap-p} is also essential in the natural growth case 
$q=p-1$ (see, for instance, \cite{JMV}).

\begin{Cor}\label{cor1} Let $1<p<n$, $0<q<p-1$, and $\mu, \sigma \in M^{+}(\R^n)$. 
If condition \eqref{cap-p} holds, then 
 any positive solution $u$ to \eqref{inhom} satisfies the estimates 
\begin{equation}\label{main-bb} 
\begin{aligned}
 C_1 \, & \left[  (\W_p \sigma(x))^{\frac{p-1}{p-1-q}}  + \W_p \mu (x)\right] \le u(x) \\ 
\le & C_2 \, \left[  (\W_p \sigma(x))^{\frac{p-1}{p-1-q}} + \W_p  \sigma(x) + \W_p \mu (x)\right], \quad x \in \R^n. 
\end{aligned}
\end{equation}
where  $C_1, C_2$  are positive constants that depend only  on $p, q, n$, and  the constant $C$ in \eqref{cap-p} (in the case of $C_2$). 
\end{Cor}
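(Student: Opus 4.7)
My plan is to derive Corollary \ref{cor1} from Theorem \ref{thm:main1} by establishing that, under the capacity condition \eqref{cap-p}, the intrinsic Wolff potential is pointwise dominated by the standard Wolff potential:
\[
(\star) \qquad \K_{p,q}\sigma(x) \le C'\, \W_p \sigma(x), \quad x\in\R^n,
\]
with $C' = C'(C,p,q,n)$. Granting $(\star)$, the lower bound in \eqref{main-bb} is immediate from the lower bound in \eqref{main-b1} upon discarding the nonnegative term $\K_{p,q}\sigma$, while the upper bound in \eqref{main-bb} follows by substituting $(\star)$ into the upper bound in \eqref{main-b1} and absorbing the extra factor into the $\W_p\sigma$ term on the right of \eqref{main-bb}.

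Comparing the integrands in \eqref{wolff-p} and \eqref{potentialK}, the inequality $(\star)$ reduces to the ball-wise estimate
\[
(\dagger) \qquad \varkappa(B(x,t))^{q(p-1)/(p-1-q)} \le c\, \sigma(B(x,t)), \quad x\in\R^n,\; t>0,
\]
with $c=c(C,p,q,n)$. To attack $(\dagger)$, I would first apply the Kilpel\"ainen--Mal\'y bound \eqref{k-m} to replace the admissible $p$-superharmonic test functions $\varphi$ in \eqref{weight-lap} by the Wolff potentials $\W_p \nu$ of their $-\Delta_p$-images $\nu$; up to the universal K--M constant, $\varkappa(B)$ then coincides with the operator norm of $\nu \mapsto \W_p\nu$ from $\{\nu \in M^+(\R^n):\nu(\R^n)=1\}$ to $L^q(d\sigma_B)$. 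In the sub-natural-growth regime $0<q<p-1$, the sublinear-duality characterization developed in \cite{CV1} identifies this norm, up to constants depending only on $p,q,n$, via an integral of the form $\int_B (\W_p\sigma_B)^{a}\,d\sigma$ with $a = q(p-1)/(p-1-q)$; so $(\dagger)$ reduces to the integral bound $\int_B (\W_p\sigma_B)^{a}\,d\sigma \le c\,\sigma(B)$.

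The main obstacle is this last integral bound under \eqref{cap-p}. The capacity condition is classically equivalent (Maz'ya's characterization) to the $(1,p)$-Sobolev trace inequality $\int|\varphi|^p\,d\sigma \le C_{\mathrm{Sob}}\int|\nabla\varphi|^p\,dx$ for $\varphi \in C_0^{\infty}(\R^n)$; translating this linear trace into the required nonlinear Wolff-potential energy bound demands a careful combination of the energy identity $\int|\nabla \W_p\sigma_B|^p\,dx \approx \int \W_p\sigma_B\,d\sigma_B$, the local Maz'ya trace applied to dyadic truncations $\min(\W_p\sigma_B, 2^k)$ (needed because the exponent $a$ may exceed $p$ when $q$ is close to $p-1$, so one cannot simply interpolate from $L^p(d\sigma)$), and a summation that converts the truncated $L^p$-bounds into the final $L^a(d\sigma_B)$-bound by $\sigma(B)$. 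The technical heart of the proof is executing this chain while tracking the dependence on the capacity constant $C$; once $(\dagger)$ is established, $(\star)$, and hence the corollary, follows by direct substitution.
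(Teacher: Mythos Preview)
Your approach is correct and matches the paper's (implicit) derivation: the paper does not prove Corollary~\ref{cor1} explicitly but simply states that it is deduced from Theorem~\ref{thm:main1} under condition~\eqref{cap-p}, citing \cite{CV2} for the $\mu=0$ case. The pointwise comparison $(\star)$, equivalently the ball-wise bound $(\dagger)$, is exactly the ingredient imported from \cite{CV1}, \cite{CV2}; your outline of its proof via the sublinear-duality characterization of $\varkappa(B)$ and Maz'ya's trace inequality with truncation supplies detail that the present paper omits.
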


In the special case $\mu=0$,  the Brezis--Kamin type pointwise estimates 
\begin{equation} \label{b-k}
 C_1 \, (\W_p \sigma(x))^{\frac{p-1}{p-1-q}} \le u(x) \le C_2 \, [(\W_p  \sigma(x))^{\frac{p-1}{p-1-q}} + \W \sigma(x)],  
\end{equation}
under the assumption  \eqref{cap-p} were obtained in \cite{CV2} (the upper estimate was proved only for the minimal solution). For 
\textit{bounded} solutions $u$,  the term $(\W_p \sigma)^{\frac{p-1}{p-1-q}}$ on the right-hand side  of \eqref{b-k}  is redundant. 
This estimate in the case $p=2$ was originally obtained in \cite{BK}.

Our main results are deduced via pointwise estimates of solutions to the fractional integral equation 
\begin{equation} \label{q-frac}
u = \W_{\alpha, p} (u^{q} \sigma) +  \W_{\alpha, p} \mu, \quad u\ge 0, \quad  \text{in} \;\; \mathbb{R}^n,
\end{equation}
where $0<q<p-1$, $0<\alpha<\frac{n}{p}$, and $\mu, \sigma \in M^{+}({R}^n)$.

Bilateral pointwise estimates of solutions to \eqref{q-frac}, similar to \eqref{two-sided}, are given in terms of nonlinear potentials $\W_{\alpha, p}$ 
and  fractional \textit{intrinsic} potentials $\mathbf{K}_{\alpha, p, q}$ defined in  Sec. \ref{sect:wolff}. 
In the definition of $\mathbf{K}_{\alpha, p, q}$, which is similar to  \eqref{potentialK} in the case $\alpha=1$, we employ localized embedding constants $\kappa(B)$ associated 
with $\sigma_B$ for balls $B=B(x, r)$, which are related to  certain weighted norm inequalities for potentials 
$\W_{\alpha, p}$. 

In the special case $p=2$, $0<q<1$, $0<2\alpha<n$, we obtain an analogue of Theorem  \ref{thm:main1}
  for the fractional Laplace problem
\begin{equation} \label{eq:frac-laplacian}
\begin{cases}
\left(-\Delta \right)^{\alpha} u = \sigma u^{q} +\mu, \quad u \ge 0  \quad \text{in} \;\; \mathbb{R}^n, \\
\liminf\limits_{x \rightarrow \infty}u(x) = 0.
\end{cases}
\end{equation}

 Our results on solutions to \eqref{q-frac} 
  demonstrate  (see Sec. \ref{sec4} below)  that Theorem  \ref{thm:main1} remains valid for more general quasilinear operators $\text{div}\mathcal{A}(x, \nabla u)$ in place of $\Delta_p$,   
  under standard boundedness and monotonicity assumptions on $\mathcal{A}(x, \xi)$ 
  (with $\alpha=1$, $0<q<p-1$),  as well as for $k$-Hessian operators (with 
$\alpha=\frac{2k}{k+1}$, $p=k+1$ and $0<q<k$). The relation 
between equations  \eqref{q-frac} and the corresponding elliptic PDE
is provided by the 
nonlinear potential theory developed in 
\cite{KuMi}, \cite{Lab},  \cite{TW2}.

If $q\ge p-1$ for the quasilinear 
equations, or $q\ge k$ for the $k$-Hessian equations, the existence results and pointwise estimates of solutions  differ greatly from Theorem  \ref{thm:main1}. They 
were  obtained  
earlier in \cite{JV}, \cite{PV1}, \cite{PV2}. 

This paper is organized as follows. 
In Sec. \ref{sect:wolff}, we recall definitions of the nonlinear potentials $\mathbf{W}_{\alpha, p}$ and 
$\mathbf{K}_{\alpha, p, q}$, and discuss some of their properties. Pointwise estimates 
of sub- and super-solutions of  
the homogeneous equation \eqref{q-hom} are discussed in Sec. \ref{sec3}. They are extended to the non-homogeneous equation \eqref{inhom} 
 in Sec. \ref{sec4}, where we prove Theorem  \ref{thm:main1},  and its  analogues for equation \eqref{q-frac}.


\section{Nonlinear potentials}\label{sect:wolff}

Let $1<p<\infty$, 
$0<\alpha< \frac{n}{p}$, and $0<q<p-1$. Let $\sigma \in M^{+}(\R^n)$. 
For the sake of simplicity, the nonlinear potential $\W_{\alpha, p}\sigma$ defined in the Introduction  
will be denoted by  $\W\sigma$, i.e., 
\begin{equation}\label{wolff}
\W\sigma (x) = \int_{0}^{\infty} \left[ \frac{\sigma(B(x,t))}{t^{n-\alpha p}}\right]^{\frac{1}{p-1}}\; \frac{dt}{t}, \quad x \in \R^n. 
\end{equation}

We denote by $\kappa$ the least constant in 
the weighted norm inequality 
\begin{equation} \label{kap-global}
||\W \nu||_{L^q(\R^n, d\sigma)} \le \kappa  \, \nu(\R^n)^{\frac{1}{p-1}}, \quad \forall \nu \in M^{+}(\R^n).  
\end{equation}
We will also need a localized version of \eqref{kap-global} for $\sigma_E=\sigma|_E$, where $E$ is 
a Borel subset   of $\R^n$, and $\kappa(E)$ is the least constant in 
\begin{equation} \label{kap-local}
||\W  \nu||_{L^q(d\sigma_{E})} \le \kappa (E) \, \nu(\R^n)^{\frac{1}{p-1}}, \quad \forall \nu \in M^{+}(\R^n). 
\end{equation}
In applications, it will be enough to use $\kappa(E)$ 
where $E=Q$ is a dyadic cube, or $E=B$ is a ball in $\R^n$. 

It is easy to see using estimates \eqref{k-m} that embedding constants 
$\kappa(B)$ in the case $\alpha=1$ are equivalent to the constants 
$\varkappa(B)$ in \eqref{weight-lap}.

We define the \textit{intrinsic} potential of Wolff type $\K \sigma= \K_{\alpha, p, q} \sigma$ 
in terms of $\kappa(B(x, t))$, the least constant in \eqref{kap-local} with $E=B(x, t)$: 
\begin{equation} \label{intrinsic-K}
\K \sigma (x)  =  \int_0^{\infty} \left[\frac{ \kappa(B(x, t))^{\frac{q(p-1)}{p-1-q}}}{t^{n- \alpha p}}\right]^{\frac{1}{p-1}}\frac{dt}{t}, \quad x \in \R^n.  
\end{equation} 

Notice that $\K_{\alpha, p, q} \sigma (x)\approx  \K_{p, q} \sigma(x)$ 
 in the case $\alpha=1$, with the equivalence constants that depend only on $p$, $q$, and $n$ (see 
\cite{CV1}). It is easy to see that  $\K \sigma (x)  \not\equiv \infty$ if and only if 
\begin{equation} \label{inf}
 \int_a^{\infty} \left[\frac{ \kappa(B(0, t))^{\frac{q(p-1)}{p-1-q}}}{t^{n- \alpha p}}\right]^{\frac{1}{p-1}}\frac{dt}{t}< \infty,
\end{equation} 
for any (equivalently, all) $a>0$. This is similar to the condition $\mathbf{W}_{\alpha, p} \sigma (x)  \not\equiv \infty$, 
 equivalent to (see, for instance,  \cite{CV1}*{Corollary 3.2}) 
\begin{equation} \label{inf-w}
 \int_a^{\infty} \left[\frac{ \sigma(B(0, t))}{t^{n- \alpha p}}\right]^{\frac{1}{p-1}}\frac{dt}{t}< \infty.
\end{equation}


\section{Homogeneous equations}\label{sec3}

Let $1<p<\infty$, 
$0<\alpha< \frac{n}{p}$, and $0<q<p-1$. Let us fix $\sigma \in M^{+}(\R^n)$.  We start with some estimates of solutions to the equation 
\begin{equation}\label{q-eq}
u(x) = \W (u^q d\sigma)(x), \quad u \ge 0,   \quad x \in  \R^n, 
\end{equation}
where $u<\infty$ $d \sigma$-a.e. (or equivalently $u \in L^q_{{\rm loc}}(\R^n, \sigma)$). 
Equation \eqref{q-eq} can also we considered pointwise at every $x \in \R^n$ where
 $u(x) = \W (u^q d\sigma)(x)<\infty$. 

We also treat the corresponding 
subsolutions $u \ge 0$ such that 
\begin{equation}\label{q-sub}
u(x)\le \W (u^q d \sigma)(x)<\infty, \quad x \in  \R^n, 
\end{equation}
and supersolutions $u \ge 0$ such that  
\begin{equation}\label{q-sup}
\W (u^q d \sigma(x)\le u(x) <\infty, \quad x \in  \R^n, 
 \end{equation}
 considered either $d \sigma$-a.e.,  or 
 at  every $x \in \R^n$ where these inequalities hold. 

For any $\nu \in M^{+}(\R^n)$ ($\nu \not=0$) such that $\W \nu \not\equiv \infty$, we set 
\begin{equation}\label{phi}
\phi_\nu(x) := \W \nu(x) \left( \frac{\W[(\W\nu)^q d \sigma](x)}{\W \nu(x)}\right)^{\frac{p-1}{p-1-q}}, \quad x \in \R^n,
\end{equation}
where we assume that $\W \nu(x)<\infty$.

Next, for $x \in \R^n$, we set 
\begin{equation}\label{phi-def}
 \phi (x) := \sup \{ \phi_\nu(x): \,\, \nu \in  M^{+}(\R^n),  \, \nu\not=0, \, \W \nu(x) < \infty\}.
\end{equation}

\begin{Thm}\label{thm:main2}
Let $1<p<\infty$, 
$0<\alpha< \frac{n}{p}$, and $0<q<p-1$. Let $\sigma \in M^{+}(\R^n)$. Then any nontrivial solution $u\ge 0$ to 
\eqref{q-eq} satisfies the estimates 
\begin{equation}\label{main-b} 
 C \,  \phi(x)  \le u(x) \le   \phi(x) , \qquad x \in \R^n,  
\end{equation}
where  $C$  is a positive constant which depends only  on $p$, $q$, $\alpha$ and $n$. 

Moreover, the upper bound in \eqref{main-b} holds for any subsolution $u$, whereas the lower bound in \eqref{main-b}  holds for any nontrivial supersolution $u$.

If $n \leq p < \infty$, then there are no nontrivial solutions to \eqref{inhom}.  
\end{Thm}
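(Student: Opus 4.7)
The proof splits into three parts: the upper bound for subsolutions, the lower bound for supersolutions, and the non-existence claim. The overall strategy exploits that $\phi$ is defined as a supremum over test measures $\nu$, so the upper bound $u \le \phi$ requires producing a \emph{single} admissible $\nu$ with $u \le \phi_\nu$, whereas the lower bound $u \ge C\phi$ demands $u \ge C\phi_\nu$ for \emph{every} admissible $\nu$.

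\textbf{Upper bound.} For a subsolution $u$ with $\W(u^q d\sigma) < \infty$ pointwise, take the canonical test measure $\nu := u^q d\sigma$. The subsolution inequality gives $\W\nu \ge u$ and $\W\nu(x) < \infty$, so $\nu$ is admissible. By monotonicity of the Wolff potential in its measure argument, $\W[(\W\nu)^q d\sigma] \ge \W[u^q d\sigma] = \W\nu$, hence the ratio $R(x) := \W[(\W\nu)^q d\sigma](x)/\W\nu(x)$ is at least $1$. Since the exponent $(p-1)/(p-1-q)$ is positive, $\phi_\nu(x) = \W\nu(x)\, R(x)^{(p-1)/(p-1-q)} \ge \W\nu(x) \ge u(x)$, and taking the supremum over $\nu$ yields $u \le \phi$.

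\textbf{Lower bound.} Fix an admissible $\nu$; the goal is $u(x) \ge C\phi_\nu(x)$ for a nontrivial supersolution $u$. The strategy is a monotone iteration built on the supersolution inequality $u \ge \W(u^q d\sigma)$ and the homogeneity $\W(t\mu) = t^{1/(p-1)}\W\mu$. Starting from $v_0 := \lambda \W\nu$ for a sufficiently small $\lambda > 0$ with $v_0 \le u$, the iterates $v_{k+1} := \W(v_k^q d\sigma)$ form a monotone increasing sequence with $v_k \le u$ for every $k$ (by induction using the supersolution inequality and monotonicity of $\W$). The one-step computation
\begin{equation*}
v_1 \;=\; \W\bigl(\lambda^q (\W\nu)^q d\sigma\bigr) \;=\; \lambda^{q/(p-1)}\,\W[(\W\nu)^q d\sigma] \;=\; \lambda^{q/(p-1)} R(x)\W\nu(x)
\end{equation*}
displays the driving dynamics: the multiplicative scalar is sent by the map $\lambda \mapsto \lambda^{q/(p-1)} R(x)$, whose attracting fixed point is precisely $R(x)^{(p-1)/(p-1-q)}$ (the derivative at the fixed point is $q/(p-1) < 1$). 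Passing to the monotone pointwise limit $v_\infty \le u$, which is a genuine solution of \eqref{q-eq} by monotone convergence in the defining integral of $\W$, and controlling the fixed-point dynamics using the identity $\W[(\W\nu)^q d\sigma] = R \cdot \W\nu$, one extracts $v_\infty \ge C\phi_\nu$ and hence $u \ge C\phi_\nu$. Taking the supremum over $\nu$ gives $u \ge C\phi$.

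The \textbf{main obstacle} is initializing the iteration: the pointwise factor $R(x)$ obstructs a closed-form scalar iteration, and the initial comparison $u \ge \lambda \W\nu$ is not automatic for an arbitrary admissible $\nu$. I would address this by exhausting $\nu$ through its compactly-supported truncations $\nu|_{B(0,r)}$: on each truncation $\W\nu|_{B(0,r)}$ decays at infinity, so for any nontrivial supersolution $u$ (bounded below on compact sets where it does not vanish, by lower semicontinuity of $p$-superharmonic functions) one can choose $\lambda > 0$ small enough to initialize. Monotone convergence as $r \to \infty$, together with continuity of $\W$ under increasing limits of measures, then extends the bound to the full $\nu$, and the definition of $\phi$ as a supremum accommodates the parameterization. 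Finally, the non-existence statement for $n \le p$ refers to the PDE \eqref{inhom} (corresponding to $\alpha = 1$, which violates $\alpha < n/p$ in this range): by the Kilpel\"ainen--Mal\'y estimate \eqref{k-m} together with \eqref{k-m-cond}, when $p \ge n$ and $\sigma \ne 0$ the integrand in \eqref{k-m-cond} fails to be summable near infinity since $t^{n-p} \le 1$ for $t \ge 1$, forcing $\W_p \sigma \equiv \infty$. Consequently, no nontrivial $p$-superharmonic solution to \eqref{inhom} with $\liminf_{x\to\infty} u(x) = 0$ can exist unless $\sigma \equiv \mu \equiv 0$.
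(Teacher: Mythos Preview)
Your upper bound is correct and is exactly the paper's argument (its Lemma~\ref{lemma-up}): choose $\nu=u^q\,d\sigma$ and observe $\phi_\nu\ge\W\nu\ge u$.

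Your lower bound, however, has a genuine gap. Two of its steps fail:
\begin{itemize}
\item \emph{Monotonicity of the iterates.} From $v_0=\lambda\,\W\nu\le u$ and the supersolution inequality you get $v_k\le u$ for all $k$, but \emph{not} $v_{k+1}\ge v_k$. The latter needs $v_0$ to be a subsolution, i.e.\ $\W(v_0^q\,d\sigma)\ge v_0$, which is $R(x)\ge\lambda^{(p-1-q)/(p-1)}$ for all $x$. Since $R(x)$ may be arbitrarily small on parts of $\R^n$, this is not available, and without it there is no increasing limit $v_\infty$ to speak of.
\item \emph{The scalar fixed-point picture.} Your dynamics $\lambda\mapsto\lambda^{q/(p-1)}R(x)$ is only valid at the first step. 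After one iteration $v_1(y)=\lambda^{q/(p-1)}R(y)\,\W\nu(y)$ with $R(\cdot)$ varying in $y$, and $v_2=\W(v_1^q\,d\sigma)$ is no longer a scalar multiple of $\W\nu$, nor of $\W[(\W\nu)^q\,d\sigma]$. The nonlinear, nonlocal operator $\W(\cdot\,d\sigma)$ does not commute with pointwise multiplication by $R$, so the attracting-fixed-point heuristic does not transfer to the actual iteration.
\end{itemize}
The initialization issue you flag is also real: truncating $\nu$ to $\nu|_{B(0,r)}$ does not tame local singularities of $\W\nu$, so $u\ge\lambda\,\W\nu$ can still fail for every $\lambda>0$.

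The paper obtains the lower bound by a completely different route. It proves an analytic inequality (Lemma~\ref{lemma-phi}),
\[
\W[(\W\nu)^q\,d\sigma](x)\;\le\;C\,(\W\nu(x))^{q/(p-1)}\Bigl[\W\sigma(x)+(\K\sigma(x))^{(p-1-q)/(p-1)}\Bigr],
\]
valid for \emph{every} admissible $\nu$, by splitting $\W\nu(y)$ for $y\in B(x,t)$ into a localized part controlled by $\kappa(B(x,t))$ and a tail controlled by $\W\nu(x)$, then applying H\"older's inequality. This yields directly $\phi_\nu(x)\le C\bigl[(\W\sigma(x))^{(p-1)/(p-1-q)}+\K\sigma(x)\bigr]$, hence the same for $\phi(x)$. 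The lower bound $u\ge c\bigl[(\W\sigma)^{(p-1)/(p-1-q)}+\K\sigma\bigr]$ for supersolutions is then imported from~\cite{CV1}. In other words, the paper routes the estimate through the intrinsic potential $\K\sigma$ rather than trying to compare $u$ and $\phi_\nu$ directly; this is what makes the argument close.
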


The proof of Theorem \ref{thm:main2} is based on a series of lemmas. 

\begin{Lem}\label{lemma-up}   Let $1<p<\infty$, 
$0<\alpha< \frac{n}{p}$, and $0<q<p-1$. Let $\sigma \in M^+(\R^n)$. Suppose $u$ is a subsolution to \eqref{q-eq}. Then 
\begin{equation}\label{main-bc}
u(x)\le \phi (x), \qquad x \in \R^n,  
\end{equation} 
provided $\W (u^q d \sigma) (x)<\infty$. In paticular, \eqref{main-bc} holds 
 $d\sigma$-a.e. 
\end{Lem}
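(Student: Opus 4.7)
The plan is to prove $u(x) \le \phi(x)$ by exhibiting a single admissible measure $\nu$ in the supremum defining $\phi$ and checking that $\phi_\nu(x) \ge u(x)$. The self-referential form of $\phi_\nu$ suggests the natural choice
$$\nu := u^q\, d\sigma.$$
If $u\equiv 0$ the statement is trivial, so I would assume $u\not\equiv 0$; the subsolution inequality $u\le \W(u^q d\sigma)$ then forces $\nu\not= 0$, since otherwise $\W\nu\equiv 0$ and hence $u\equiv 0$. At any point $x$ where $\W(u^q d\sigma)(x)<\infty$, the finiteness hypothesis on $\W\nu(x)$ entering the definition of $\phi_\nu$ is met, so $\nu$ is a valid competitor in \eqref{phi-def}.

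Next I would exploit the subsolution property twice. First, $\W\nu(x)=\W(u^q d\sigma)(x)\ge u(x)$ pointwise. Second, raising to the $q$-th power and invoking monotonicity of $\W$ in its measure argument,
$$\W\!\bigl[(\W\nu)^{q}\, d\sigma\bigr](x)\;\ge\;\W[u^{q}\, d\sigma](x)\;=\;\W\nu(x),\qquad x\in\R^n.$$
Consequently, at every $x$ with $0<\W\nu(x)<\infty$,
$$\frac{\W[(\W\nu)^{q}d\sigma](x)}{\W\nu(x)}\;\ge\;1.$$
Since $r:=\tfrac{p-1}{p-1-q}>0$, raising this ratio to the power $r$ preserves the bound $\ge 1$, giving
$$\phi_\nu(x)\;=\;\W\nu(x)\left(\frac{\W[(\W\nu)^{q}d\sigma](x)}{\W\nu(x)}\right)^{\!r}\;\ge\;\W\nu(x)\;\ge\;u(x).$$
Taking the supremum over admissible $\nu$ yields $u(x)\le\phi(x)$; at points where $\W\nu(x)=0$ the subsolution inequality forces $u(x)=0$, so the bound is trivial there.

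For the $d\sigma$-a.e. assertion, I would appeal to the convention declared just below \eqref{q-sup}: the subsolution inequality (which builds in $\W(u^q d\sigma)<\infty$) may be understood either pointwise or $d\sigma$-a.e., and the argument above propagates either interpretation into the corresponding conclusion. There is no real obstacle in this lemma: the only creative step is the self-referential choice $\nu=u^q d\sigma$, which converts the subsolution property into a ratio $\ge 1$ whose positive $r$-th power is still $\ge 1$. The remainder is monotonicity of $\W$ and a one-line chain of inequalities.
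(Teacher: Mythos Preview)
Your proof is correct and follows essentially the same approach as the paper's: both set $d\nu = u^q\,d\sigma$, use the subsolution inequality $u\le \W\nu$ to obtain $\W\nu \le \W[(\W\nu)^q d\sigma]$, and conclude that the ratio in $\phi_\nu$ is at least $1$, whence $u\le \W\nu \le \phi_\nu \le \phi$. Your version is slightly more explicit about the edge cases $u\equiv 0$ and $\W\nu(x)=0$, but the argument is the same.
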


\begin{proof} Setting $d \nu= u^q d \sigma$, we see that 
$u(x) \le \W \nu (x)<\infty$, and consequently $\W \nu (x) \le \W[(\W\nu)^q d \sigma](x)$.
Clearly, 
$$ 
\phi_\nu (x) := \W \nu (x) \, \left (\frac{\W[(\W\nu)^q d \sigma](x)}{\W \nu (x)}\right)^{\frac{p-1}{p-1-q}}\ge \W \nu (x).
$$
 Hence, 
$$
u(x)\le \phi_\nu (x), \quad x \in \R^n, 
$$ 
which yields  immediately \eqref{main-bc}. 
\end{proof}

\begin{Lem}\label{lemma-phi}   
Let $1<p<\infty$, 
$0<\alpha< \frac{n}{p}$, and $0<q<p-1$. Let $\nu, \sigma \in M^+(\R^n)$. Then there exists a positive constant $C$  which depends only on $p$, $q$, $\alpha$, and $n$ 
such that 
\begin{equation}\label{hom-in}
\begin{aligned}
\W[(\W\nu)^q d \sigma] & (x)  \le C \, (\W\nu(x))^{\frac{q}{p-1}} \\ \times & \left[\W\sigma(x) + (\K \sigma(x))^{\frac{p-1-q}{p-1}}  \right], \quad x \in \R^n.
\end{aligned}
\end{equation}
\end{Lem}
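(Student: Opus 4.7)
The plan is to localize the inner Wolff potential $\W\nu(y)$ at each scale $t$ appearing in the outer integral defining $\W[(\W\nu)^q\,d\sigma](x)$, bounding the local piece via the intrinsic embedding constant $\kappa$ and the tail via the pointwise value $\W\nu(x)$.

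First, I would fix $x\in\R^n$ and, for each $t>0$, split $\W\nu(y)=A_t(y)+B_t(y)$, where $A_t$ and $B_t$ denote the parts of the defining integral over $r\in(0,t)$ and $r\in(t,\infty)$, respectively. For $y\in B(x,t)$, the inclusions $B(y,r)\subset B(x,2t)$ (when $r\le t$) and $B(y,r)\subset B(x,2r)$ (when $r\ge t$) yield
$$
A_t(y)\le \W(\nu|_{B(x,2t)})(y), \qquad B_t(y)\le C\,\W\nu(x),
$$
the latter via a change of variables $s=2r$. Using $(a+b)^q\le C(a^q+b^q)$, integrating $d\sigma$ over $B(x,t)$, and invoking the definition of the embedding constant $\kappa(B(x,2t))$ applied to the measure $\nu|_{B(x,2t)}$, I arrive at the scale-$t$ estimate
$$
\int_{B(x,t)}(\W\nu)^q\,d\sigma\le C\,\kappa(B(x,2t))^q\,\nu(B(x,2t))^{\frac{q}{p-1}}+C\,(\W\nu(x))^q\,\sigma(B(x,t)).
$$

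Next, taking $(p-1)$-th roots (via $(a+b)^{1/(p-1)}\le C(a^{1/(p-1)}+b^{1/(p-1)})$), dividing by $t^{(n-\alpha p)/(p-1)}$, and integrating $dt/t$, the contribution of the second term factors directly, since $\W\nu(x)$ is independent of $t$, into $C(\W\nu(x))^{q/(p-1)}\W\sigma(x)$. For the remaining piece
$$
I_1=\int_0^\infty\frac{\kappa(B(x,2t))^{q/(p-1)}\,\nu(B(x,2t))^{q/(p-1)^2}}{t^{(n-\alpha p)/(p-1)}}\,\frac{dt}{t},
$$
I would apply H\"older's inequality in $t$ with conjugate exponents $s=(p-1)/(p-1-q)$ and $s'=(p-1)/q$, splitting $t^{-(n-\alpha p)/(p-1)}=t^{-a}t^{-b}$ with $a=(n-\alpha p)(p-1-q)/(p-1)^2$ and $b=(n-\alpha p)q/(p-1)^2$. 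A direct check shows that the first H\"older factor, raised to $1/s=(p-1-q)/(p-1)$, matches the defining integral of $(\K\sigma(x))^{(p-1-q)/(p-1)}$ (modulo a harmless rescaling $t\mapsto t/2$), while the second, raised to $1/s'=q/(p-1)$, reproduces $(\W\nu(x))^{q/(p-1)}$; combining the two contributions then yields the desired inequality.

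The main obstacle will be the exponent bookkeeping in this last H\"older step: the powers $q/(p-1)$ on $\kappa$ and $q/(p-1)^2$ on $\nu$ arising from the scale-$t$ estimate must, after raising to $1/s$ and $1/s'$, produce exactly the exponents $q(p-1)/(p-1-q)$ on $\kappa$ inside $\K\sigma$ and $1$ on $\nu$ inside $\W\nu$; this works thanks to the identity $1/s+1/s'=1$ together with the splitting $a+b=(n-\alpha p)/(p-1)$. Once this bookkeeping is in place, the remaining geometric constants (such as those arising from doubling the radius) are absorbed into $C=C(p,q,\alpha,n)$, and the proof concludes.
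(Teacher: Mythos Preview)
Your proposal is correct and follows essentially the same route as the paper's own proof: the same splitting of $\W\nu(y)$ at scale $t$ via the inclusions $B(y,r)\subset B(x,2t)$ and $B(y,r)\subset B(x,2r)$, the same use of the embedding constant to control the local piece, and the same H\"older step with exponents $\tfrac{p-1}{p-1-q}$ and $\tfrac{p-1}{q}$ to separate $\K\sigma$ from $\W\nu$. The only cosmetic difference is that the paper uses $\kappa(B(x,t))$ rather than your $\kappa(B(x,2t))$ (since the $\sigma$-integration is over $B(x,t)$, the smaller constant already suffices), but by monotonicity of $\kappa$ this is harmless and absorbed into the dilation $t\mapsto t/2$.
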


\begin{proof}  Without loss of generality we may assume that $\nu\not=0$ and $\W\nu(x)<\infty$. For $x \in \R^n$, we have  
\begin{align}\label{nu-sigma}
\W[(\W\nu)^q d \sigma](x) = \int_0^\infty \left[ \frac{\int_{B(x,t)} (\W\nu(y))^q d \sigma(y)}{t^{n-\alpha p}}\right]^{\frac{1}{p-1}}\frac{dt}{t}.
\end{align}
For $y\in B(x,t)$, we have that $B(y,r)\subset B(x, 2t)$ if $0<r\le t$, and $B(y,r)\subset B(x, 2r)$ if 
$r>t$. Consequently, for $y\in B(x,t)$,
\begin{align*}
& \W\nu(y) = \int_0^t \left[ \frac{\nu(B(y,r))}{r^{n-\alpha p}}\right]^{\frac{1}{p-1}}\frac{dr}{r} 
 +  \int_t^\infty \left[ \frac{\nu(B(y,r))}{r^{n-\alpha p}}\right]^{\frac{1}{p-1}}\frac{dr}{r} \\
&\le \int_0^t \left[ \frac{\nu(B(y,r)\cap B(x, 2t))}{r^{n-\alpha p}}\right]^{\frac{1}{p-1}}\frac{dr}{r} 
+ \int_t^\infty \left[ \frac{\nu(B(x,2r))}{r^{n-\alpha p}}\right]^{\frac{1}{p-1}}\frac{dr}{r}\\
& \le  \W\nu_{B(x, 2t)} (y) + c \, \W\nu(x),  
\end{align*}
were $c=2^{\frac{n-\alpha p}{p-1}}$. Hence, 
\begin{align*}
\int_{B(x,t)} (\W\nu(y))^q d \sigma(y)& \le \int_{B(x,t)}   \left(\W\nu_{B(x, 2t)}\right)^q  d \sigma(y) 
\\ & + c^q \, \left( \W\nu(x)\right)^q \, \sigma (B(x,t)). 
\end{align*}
Notice that by \eqref{kap-local}, 
$$
\int_{B(x,t)}  \left(\W\nu_{B(x, 2t)}\right)^q  d \sigma(y) \le \kappa(B(x, t))^{q} \, 
\nu(B(x, 2t))^{\frac{q}{p-1}}.
$$
Combining the preceding estimates, we deduce 
\begin{align*}
\int_{B(x,t)} (\W\nu(y))^q d \sigma(y)& \le   \kappa(B(x, t))^{q}  \, \nu(B(x, 2t))^{\frac{q}{p-1}} \\ & + c^q \, \left( \W\nu(x)\right)^q \, \sigma (B(x,t)).
\end{align*}
It follows from \eqref{nu-sigma} and the preceding estimate, 
\begin{align*}
& \W[(\W\nu)^q d \sigma](x) \\ & \le c \, \int_0^\infty 
 \left[   \frac{ \kappa(B(x, t))^{q}  \, \nu(B(x, 2t))^{\frac{q}{p-1}}} {t^{n-\alpha p}} \right]^{\frac{1}{p-1}} 
\frac{dt}{t} 
\\& + c \,  \left(\W\nu(x)\right)^{\frac{q}{p-1}} \, \int_0^\infty \left[ \frac{\sigma (B(x,t))}{t^{n-\alpha p}}\right]^{\frac{1}{p-1}}\frac{dt}{t} \\ & = c \, (I + II),
\end{align*}
 where $c=c(p, q, n)$. 
 
 By H\"{o}lder's inequality with exponents  $\frac{p-1}{p-1-q}$ and $\frac{p-1}{q}$, we estimate 
 \begin{align*}
& I= \int_0^\infty 
 \left[   \frac{ \kappa(B(x, t))^{q}  \, \nu(B(x, 2t))^{\frac{q}{p-1}}} {t^{n-\alpha p}} \right]^{\frac{1}{p-1}} \frac{dt}{t}  \\ & \le \left (  \int_0^\infty   \left[   \frac{\nu(B(x, 2t))} {t^{n-\alpha p}} \right]^{\frac{1}{p-1}} \frac{dt}{t}  
 \right)^{\frac{q}{p-1}}
 \\& \times \left ( \int_0^\infty 
  \left[   \frac{ \kappa(B(x, t)^{\frac {q(p-1)}{p-1-q}}} {t^{n-\alpha p}} \right]^{\frac{1}{p-1}} \frac{dt}{t}  
 \right)^{\frac{p-1-q}{p-1}} \\ & \le 2^{\frac{q(n-\alpha p)}{(p-1)^2}} \,  \left(\W\nu(x)\right)^{\frac{q}{p-1}}\, \left(\K \sigma(x)\right)^{\frac{p-1-q}{p-1}}.
 \end{align*}
Clearly, 
  \begin{align*}
  II=  \left(\W\nu(x)\right)^{\frac{q}{p-1}} \, \int_0^\infty \left[ \frac{\sigma (B(x,t))}{t^{n-\alpha p}}\right]^{\frac{1}{p-1}}\frac{dt}{t} =  \left(\W\nu(x)\right)^{\frac{q}{p-1}} \,  \W\sigma(x). 
   \end{align*}
   We deduce 
   \begin{align*}
 & \W[(\W\nu)^q d \sigma](x) \le c (I+II) \\ & \le c \, \left(\W\nu(x)\right)^{\frac{q}{p-1}} \,  \left[  \W\sigma(x) + \left(\K \sigma(x)\right)^{\frac{p-1-q}{p-1}} 
 \right].
  \end{align*}
  This completes the proof of \eqref{hom-in}.
   \end{proof}

   \begin{Lem}\label{lemma-phi1}   
Let $1<p<\infty$, 
$0<\alpha< \frac{n}{p}$, and $0<q<p-1$. Let $\sigma \in M^+(\R^n)$. Then there exist positive constants $C_1$, $C_2$  which depend only on $p$, $q$, $\alpha$ and $n$
such that 
\begin{equation}\label{hom-in1}
C_1 \, \phi(x) \le \left(\W\sigma(x)\right)^{\frac{p-1}{p-1-q}}  + \K \sigma(x)  \le C_2 \, \phi(x), 
 \end{equation}
 where the lower estimate holds at all $x \in \R^n$,  whereas the upper estimate holds 
 provided $\W\sigma(x)<\infty$ and  $\K \sigma(x)  < \infty$. 
\end{Lem}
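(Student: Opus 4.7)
The lemma has two directions, each proved by choosing an appropriate test measure in the supremum defining $\phi$.

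\emph{First inequality (upper bound for $\phi$).} I would substitute Lemma~\ref{lemma-phi} directly into the definition of $\phi_\nu$. Writing $\phi_\nu(x) = (\W\nu(x))^{-q/(p-1-q)} (\W[(\W\nu)^q d\sigma](x))^{(p-1)/(p-1-q)}$, the factor $(\W\nu(x))^{q/(p-1)}$ supplied by Lemma~\ref{lemma-phi} cancels perfectly upon raising to the power $(p-1)/(p-1-q)$, leaving $\phi_\nu(x) \le C[\W\sigma(x) + (\K\sigma(x))^{(p-1-q)/(p-1)}]^{(p-1)/(p-1-q)}$. Since $(p-1)/(p-1-q) \ge 1$, the elementary inequality $(A+B)^s \le 2^{s-1}(A^s+B^s)$ gives $\phi_\nu(x) \le C'[(\W\sigma(x))^{(p-1)/(p-1-q)} + \K\sigma(x)]$, and taking the supremum over $\nu$ establishes the first inequality in \eqref{hom-in1}.

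\emph{The $(\W\sigma)^{(p-1)/(p-1-q)}$ summand.} Test with $\nu = \sigma$. The key estimate is the self-improvement $\W[(\W\sigma)^q d\sigma](x) \gtrsim (\W\sigma(x))^{1 + q/(p-1)}$, obtained as follows. For $y \in B(x,t)$, the inclusion $B(y, 2r) \supset B(x, r)$ when $r \ge 2t$ yields the standard tail bound $\W\sigma(y) \gtrsim T(x,t)$ where $T(x,t) := \int_t^\infty [\sigma(B(x,s))/s^{n-\alpha p}]^{1/(p-1)} ds/s$. Integrating $(\W\sigma)^q$ against $\sigma$ on $B(x,t)$, inserting into the outer Wolff integral, and using the identity $-dT(x,t) = [\sigma(B(x,t))/t^{n-\alpha p}]^{1/(p-1)} dt/t$, reduces the matter to $\int_0^\infty T^{q/(p-1)}(-dT) = \frac{p-1}{p-1+q}(\W\sigma(x))^{(p-1+q)/(p-1)}$. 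Feeding this back into $\phi_\sigma$ produces $\phi_\sigma(x) \gtrsim (\W\sigma(x))^{(p-1)/(p-1-q)}$.

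\emph{The $\K\sigma$ summand.} For each dyadic scale $t_k = 2^k$, the definition of $\kappa_k := \kappa(B(x,t_k))$ yields a near-extremal $\nu_k \in M^+(\R^n)$ with $\nu_k(\R^n) = 1$, supported in $B(x, 2t_k)$, chosen sufficiently spread that $\W\nu_k(x) \lesssim t_k^{(\alpha p - n)/(p-1)}$, and satisfying $\|\W\nu_k\|_{L^q(\sigma|_{B(x,t_k)})} \ge c\kappa_k$. Set $\nu := \sum_k \lambda_k \nu_k$ with weights $\lambda_k := \kappa_k^{q(p-1)/(p-1-q)}$. Subadditivity-type bounds on $\W$ give $\W\nu(x) \lesssim \sum_k \lambda_k^{1/(p-1)} t_k^{(\alpha p - n)/(p-1)}$, while the monotonicity $\W\nu \ge \lambda_k^{1/(p-1)} \W\nu_k$, combined with restricting the outer Wolff integral to the annulus $[t_k, 2t_k]$, yields $\W[(\W\nu)^q d\sigma](x) \gtrsim \sum_k \lambda_k^{q/(p-1)^2}[\kappa_k^q/t_k^{n-\alpha p}]^{1/(p-1)}$. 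A direct check of exponents shows both of these sums are comparable to $\sum_k \kappa_k^{q/(p-1-q)} t_k^{(\alpha p - n)/(p-1)} \approx \K\sigma(x)$, and the powers in $\phi_\nu$ balance so that $\phi_\nu(x) \gtrsim \K\sigma(x)$.

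The main obstacle is the last step: assembling a single test measure $\nu$ from scale-by-scale extremals so as to recover the full integral $\K\sigma(x)$, rather than just its pointwise supremum. This requires both the spreading condition on each $\nu_k$ (to keep $\W\nu_k(x)$ finite and scale-correct) and the precise dyadic weights $\lambda_k = \kappa_k^{q(p-1)/(p-1-q)}$ that equate $\W\nu(x)$ with $\W[(\W\nu)^q d\sigma](x)^{(p-1)/q(p-1-q) \cdot \text{something}}$ up to constants. The regime $1<p<2$ requires extra care, as $\W$ is not genuinely subadditive; one would truncate to finitely many scales, replace subadditivity with a weighted variant, and pass to the limit at the end.
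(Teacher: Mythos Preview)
Your treatment of the first inequality in \eqref{hom-in1} is exactly the paper's: plug Lemma~\ref{lemma-phi} into the definition of $\phi_\nu$, observe that the power of $\W\nu(x)$ cancels, and take the supremum over $\nu$.

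For the second inequality the paper proceeds very differently, and avoids the construction you outline entirely. It invokes \cite{CV1}*{Theorem 4.8}: under $\W\sigma\not\equiv\infty$ and $\K\sigma\not\equiv\infty$ there is a minimal solution $u$ of \eqref{q-eq} already known to satisfy
\[
u(x)\ \approx\ (\W\sigma(x))^{\frac{p-1}{p-1-q}}+\K\sigma(x).
\]
Since this $u$ is in particular a subsolution, Lemma~\ref{lemma-up} (with $d\nu=u^q\,d\sigma$) gives $u(x)\le\phi(x)$, and the lower bound on $u$ finishes the job. No test measure tailored to $\K\sigma$ is ever built; the $\K\sigma$ term comes in prepackaged from \cite{CV1}. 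Your argument for the $(\W\sigma)^{\frac{p-1}{p-1-q}}$ summand via $\nu=\sigma$ is correct and in fact appears in the paper's proof of Remark~\ref{rem3.5}, where the self-improvement $\W[(\W\sigma)^q d\sigma]\gtrsim (\W\sigma)^{1+q/(p-1)}$ is quoted from \cite{CV1}*{Lemma 3.5}.

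The genuine gap in your plan is the $\K\sigma$ step. The near-extremal $\nu_k$ for $\kappa(B(x,t_k))$ ranges over all of $M^+(\R^n)$, and there is no reason it should be ``sufficiently spread'' to satisfy $\W\nu_k(x)\lesssim t_k^{(\alpha p-n)/(p-1)}$; indeed, concentrating $\nu_k$ near $x$ can only increase $\|\W\nu_k\|_{L^q(\sigma_B)}$, so the extremizers typically want to collapse toward $x$, which makes $\W\nu_k(x)$ blow up. You would need a separate argument that one can spread each $\nu_k$ (e.g.\ replace it by an averaged or truncated version) while retaining a fixed fraction of $\kappa_k$, and that argument is not supplied and is not obvious. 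Without it, your assembled $\nu=\sum_k\lambda_k\nu_k$ may fail the admissibility condition $\W\nu(x)<\infty$ in the definition of $\phi$, and the upper bound $\W\nu(x)\lesssim\K\sigma(x)$ you need for the final cancellation would collapse. The paper's route sidesteps this completely by importing the bilateral solution estimate from \cite{CV1}, at the cost of relying on that external machinery.
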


\begin{Rem}\label{rem3.5} 
The assumptions $\W\sigma(x)<\infty$ and  $\K \sigma(x)  < \infty$
 in Lemma \ref{lemma-phi1}  can be replaced with $\W\sigma\not\equiv \infty$ and  $\K \sigma\not\equiv\infty$; then $\phi(x)<\infty$ $d \sigma$-a.e., and 
  \eqref{hom-in1} holds 
$d \sigma$-a.e. Moreover, the assumption $\W\sigma(x)<\infty$  in Lemma \ref{lemma-phi1} can be dropped altogether as shown below. 
\end{Rem} 

\begin{proof}[Proof of Lemma \ref{lemma-phi1}]  Let $\nu\in M^{+}(\R^n)$, $\nu\not=0$. 
Suppose $ \W\nu(x)<\infty$. 
Raising both sides of  \eqref{hom-in} to the power $\frac{p-1}{p-1-q}$ and multiplying 
by $ \W\nu(x)$, we obtain,  
\begin{align*} 
\phi_\nu(x):= & \W \nu (x) \, \left (\frac{\W[(\W\nu)^q d \sigma](x)}{\W \nu (x)}\right)^{\frac{p-1}{p-1-q}} \\& 
\le C^{\frac{p-1}{p-1-q}} \, \left[ \W\sigma(x)  + (\K \sigma(x))^{\frac{p-1-q}{p-1}} \right]^{\frac{p-1}{p-1-q}}.  
   \end{align*}
 The lower estimate in \eqref{hom-in1} follows immediately from the preceding inequality.

  To prove  the upper estimate in \eqref{hom-in1}, notice that,  
  since $\W\sigma(x)\not\equiv \infty$ and $\K \sigma(x)\not\equiv \infty$, it follows by 
 \cite[Theorem 4.8]{CV1} that there exists a (minimal) solution $u$ to \eqref{q-eq} such that 
  \begin{equation}\label{lower-upper}
  \begin{aligned}
  c _1\, & \left[  \left(\W\sigma(x)\right)^{\frac{p-1}{p-1-q}}  + \K \sigma(x) \right]\le  u(x) \\
  & \le  c_2 \, \left[  \left(\W\sigma(x)\right)^{\frac{p-1}{p-1-q}}  + \K \sigma(x) \right], \quad 
x \in \R^n. 
\end{aligned} 
      \end{equation}
  Here $c_1, c_2$ are positive constants which depend only on $p$, $q$, $\alpha$ and $n$, and  
 \eqref{lower-upper} holds at $x$ provided   
  $\W\sigma(x)< \infty$ and $\K \sigma(x)<\infty$.  Moreover, in this case $u(x)=\W(u^q d\sigma)(x)<\infty$. Thus, by Lemma \ref{lemma-up} and the lower bound in \eqref{lower-upper}, we have  
   $$
   c _1\, \left[  \left(\W\sigma(x)\right)^{\frac{p-1}{p-1-q}}  + \K \sigma(x) \right] \le u(x) \le \phi (x).  
   $$
  The proof of Lemma \ref{lemma-phi1} is complete. 
  \end{proof}
  
      \begin{proof}[Proof of Remark \ref{rem3.5}] If $\W\sigma(x)\not\equiv \infty$ and $\K \sigma\not\equiv \infty$, then as indicated in the above proof, there exists  a solution $u$ to \eqref{q-eq} such that $u= \W(u^q d \sigma)<\infty$ $d\sigma$-a.e., and 
      \eqref{lower-upper} holds $d\sigma$-a.e. In particular,  $\W\sigma(x)\not\equiv \infty$ and $\K \sigma\not\equiv \infty$ 
    $d\sigma$-a.e. Letting $d \nu = u^q d \sigma$, we 
     deduce 
     $u \le \phi_\nu \le \phi$ $d\sigma$-a.e., so that \eqref{hom-in1} 
     holds $d\sigma$-a.e. as well.

    Let us assume for a moment that $\W\sigma (x) < \infty$. 
 Then, letting $\nu=\sigma$ in the definition of $\phi_\nu$, we deduce by \cite[Lemma 3.5]{CV1} with $r=q$,  
 $$
 \W[(\W\sigma)^q d \sigma](x)\ge c \, \left (\W\sigma (x) \right)^{\frac{q}{p-1} +1}, 
 $$ 
 where $c$ is a positive constant which depends only on $p$, $q$, $\alpha$  and $n$. Hence, 
   \begin{align*}\label{nu-sigma}
  \phi(x) \ge \phi_\sigma(x) & = \W \sigma (x) \, \left (\frac{\W[(\W\sigma)^q d \sigma](x)}{\W \sigma (x)}\right)^{\frac{p-1}{p-1-q}} \\& \ge c \, (\W\sigma(x))^{\frac{p-1}{p-1-q}}. 
        \end{align*}

  Next, we observe that  
in the proof of the upper estimate in \eqref{hom-in1}, we may assume without loss of generality 
 that  $\W\sigma\not\equiv \infty$. 
 Otherwise, we may replace $\sigma$ with 
       $\sigma_{B(0, R)}$ for any $R>0$. Then  clearly $\W\sigma_{B(0, R)}\not\equiv \infty$,  
       and by the argument presented above (applied  to $\sigma_{B(0, R)}$ in place of $\sigma$), we see 
       that $\phi(x)\ge c \, (\W\sigma_{B(0, R)}(x))^{\frac{p-1}{p-1-q}}$. Letting $R\to \infty$ and using the 
       monotone convergence theorem, we see that the right-hand side tends to $\infty$ at every $x\in \R^n$, which forces 
    $\phi\equiv\infty$. 
 
    Finally, if  $\W\sigma(x)=\infty$, but  $\W\sigma\not\equiv \infty$, we may consider 
        $\W \sigma_k$, where $\sigma_k$ is the $p$-measure $-\Delta_p v_k =\sigma_k$, 
        so that $v_k \approx \W \sigma_k$, with 
        $v_k = \min (v, k)$ where $-\Delta_p v =\sigma$ and $v\approx \W \sigma$.  
        Notice that $\W \sigma_k(x)=k$. 
        
        Then, clearly,  
        $$
        \phi_{\sigma_k} (x) = k^{-\frac{q}{p-1-q}} \left( \W[(\W\sigma_k)^q d \sigma](x)\right)^{\frac{p-1}{p-1-q}}. 
        $$
        For $k>0$, we set $E_k=\{y:\, \W \sigma (y)\ge k \}$, so that $\W \sigma_{E_k} (y)=\W \sigma (y)$ for $y \in E_k$. 
         We estimate 
        $$
        \W[(\W\sigma_k)^q d \sigma](x) \ge k^{\frac{q}{p-1}} \W \sigma_{E_k}(x). 
        $$
        Thus,
         $$
        \phi(x)\ge \phi_{\sigma_k} (x) \ge  \left (\W \sigma_{E_k}(x)\right)^{\frac{p-1}{p-1-q}}. 
        $$
        Letting $k \to 0$, we see by the monotone convergence theorem that 
        $$
        \phi(x)\ge \phi_{\sigma_k} (x) \ge  \left (\W \sigma(x)\right)^{\frac{p-1}{p-1-q}} = \infty. 
        $$
        In other words, the assumption $\W \sigma (x)<\infty$ in Lemma \ref{lemma-phi1} 
        is redundant, and actually follows from 
       the fact that $\phi(x)<\infty$. 
       
\end{proof}

    \begin{proof}[Proof of Theorem \ref{thm:main2}] The upper bound in \eqref{main-b} 
    for any subsolution $u$ follows from Lemma \ref{lemma-up}, whereas the lower bound 
    for any nontrivial supersolution $u$ is a consequence of 
    Lemma \ref{lemma-phi} and \eqref{lower-upper}.
   \end{proof}

   As a consequence of the preceding results, we obtain the following corollary. 
    \begin{Cor}
   Under the assumptions of Theorem \ref{thm:main2}, there exist positive constants $C_1$, $C_2$ which depend 
   only on $p$, $q$, $\alpha$ and $n$ such that 
    \begin{equation}\label{phi-k-w} 
    \begin{aligned}
    C_1 \, & \left[ \left(\W\sigma(x)\right)^{\frac{p-1}{p-1-q}}  + \K \sigma(x) \right] \le u(x) \\& \le 
    C_2 \, \left[ \left(\W\sigma(x)\right)^{\frac{p-1}{p-1-q}}  + \K \sigma(x) \right], 
    \qquad x \in \R^n,  
    \end{aligned}
       \end{equation}
       for any solution $u$ to \eqref{q-eq}. Moreover, the lower estimate holds   for any supersolution $u$ such that \eqref{q-sup} holds at $x \in \R^n$, whereas the upper estimate holds for any subsolution $u$ 
       such that \eqref{q-sub} holds at $x \in \R^n$,  and also $d \sigma$-a.e.
   \end{Cor}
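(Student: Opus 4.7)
The plan is to reduce the corollary directly to Theorem~\ref{thm:main2} together with the equivalence
\[
\phi(x)\;\approx\;(\W\sigma(x))^{\frac{p-1}{p-1-q}}+\K\sigma(x),
\]
which is precisely the content of Lemma~\ref{lemma-phi1} together with its strengthening in Remark~\ref{rem3.5} (the latter is needed to remove the \emph{a priori} finiteness hypothesis $\W\sigma(x)<\infty,\,\K\sigma(x)<\infty$, so that the equivalence is known pointwise whenever the right-hand side is finite, and in particular $d\sigma$-a.e.). For any nontrivial solution $u$ of \eqref{q-eq}, Theorem~\ref{thm:main2} provides the sandwich $C\phi(x)\le u(x)\le\phi(x)$ at every $x\in\R^n$, and combining it with the two-sided equivalence above produces \eqref{phi-k-w} with new constants $C_1,C_2$ depending only on $p$, $q$, $\alpha$, $n$.

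For the one-sided refinements, I would treat sub- and supersolutions separately. If $u$ satisfies \eqref{q-sub}, Lemma~\ref{lemma-up} yields $u(x)\le\phi(x)$ wherever $\W(u^q d\sigma)(x)<\infty$, and in particular $d\sigma$-a.e.; chaining this with the upper half of Lemma~\ref{lemma-phi1} gives the right-hand inequality in \eqref{phi-k-w} in exactly the same two regimes. For a nontrivial supersolution $u$ satisfying \eqref{q-sup} at $x$, the bound $u(x)\ge C\phi(x)$ was essentially established during the proof of Theorem~\ref{thm:main2} by invoking Lemma~\ref{lemma-phi} and comparing $u$ to the minimal solution via \eqref{lower-upper}; composing with the lower half of Lemma~\ref{lemma-phi1} then gives the left-hand inequality in \eqref{phi-k-w} at the same $x$.

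I do not expect any genuine obstacle: the argument is a bookkeeping assembly of Lemmas~\ref{lemma-up}, \ref{lemma-phi}, and \ref{lemma-phi1} on top of Theorem~\ref{thm:main2}. The only point requiring some care is to keep track of where each estimate is valid — pointwise in $\R^n$, pointwise under a local finiteness condition on $\W(u^q d\sigma)$, or only $d\sigma$-a.e. — so that the hypothesis on $u$ in each of the three assertions (solution, sub-, or supersolution) matches the regime in which the underlying inequality on $\phi$ has been proved. Given Remark~\ref{rem3.5}, this matching is automatic, and the constants $C_1,C_2$ in \eqref{phi-k-w} inherit their dependence only on $p,q,\alpha,n$ from the constants of Theorem~\ref{thm:main2} and Lemma~\ref{lemma-phi1}.
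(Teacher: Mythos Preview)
Your proposal is correct and matches the paper's own approach: the paper does not give a separate proof of the corollary but simply records it ``as a consequence of the preceding results,'' and you have correctly identified those ingredients (Theorem~\ref{thm:main2}, Lemma~\ref{lemma-up}, Lemma~\ref{lemma-phi1}, and Remark~\ref{rem3.5}). One minor point: your references to the ``upper half'' and ``lower half'' of Lemma~\ref{lemma-phi1} are swapped relative to the direction you actually need (to bound a subsolution above by $(\W\sigma)^{\frac{p-1}{p-1-q}}+\K\sigma$ you use $C_1\phi\le(\W\sigma)^{\frac{p-1}{p-1-q}}+\K\sigma$, and for the supersolution lower bound you can simply invoke \eqref{lower-upper} directly rather than passing through $\phi$ and back), but this is labeling and not a gap.
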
 
   
   \begin{Rem} 
The upper estimate for $u$ in \eqref{phi-k-w} was proved earlier in \cite{CV1} only for the nontrivial \textit{minimal}  
 solution to \eqref{q-eq}, together with the lower estimate for any supersolution. 
\end{Rem}

\section{Non-homogeneous equations}\label{sec4}

In this section, we deduce estimates for sub- and super-solutions to the equation 
\begin{equation}\label{q-inhom}
u = \W (u^q d\sigma) + \W \mu, \quad u \ge 0  \quad {\rm in}  \,\,  \R^n, 
\end{equation}
in the case $0<q<p-1$ which immediately yields the corresponding estimates to solutions to \eqref{inhom} via the Wolff potential estimates \eqref{k-m}. 
The case $\mu=0$ was considered in Sec. \ref{sec3}, so we assume here that $\mu \not=0$. In particluar, 
all solutions $u$ to \eqref{q-inhom} are nontrivial: $u \ge \W \mu>0$, and $u<\infty$ $d \sigma$-a.e. (or q.e.) in $\R^n$.

\begin{Thm}\label{thm:main3}
Let $1<p<\infty$, 
$0<\alpha< \frac{n}{p}$, and $0<q<p-1$. Let $\sigma, \mu  \in M^{+}(\R^n)$. Then there exist positive constants $C_1, C_2$  which depend only  on $p$, $q$, $\alpha$ and $n$ such that any nonnegative solution $u$ to \eqref{q-inhom} satisfies the estimates 
\begin{equation}\label{main-c} 
\begin{aligned}
 C_1 \, & \left[  (\W \sigma(x))^{\frac{p-1}{p-1-q}} + \K \sigma(x)  + \W \mu (x)\right] \le u(x) \\ 
& \le  C_2 \, \left[  (\W \sigma(x))^{\frac{p-1}{p-1-q}} + \K \sigma(x) + \W \mu (x)\right], \qquad x \in \R^n, 
\end{aligned}
\end{equation}
where the upper estimate holds at every $x$ where $u(x)<\infty$,  and consequently $d \sigma$-a.e. and q.e.

Moreover, the lower  estimate in \eqref{main-c} holds for every supersolution $u$ at every $x \in \R^n$ such that 
\begin{equation}\label{main-c-sup} 
\W (u^q d\sigma)(x) +  \W \mu (x) \le u(x) <\infty, 
\end{equation}
whereas the upper estimate holds for every subsolution $u$ at every $x\in \R^n$ such that 
\begin{equation}\label{main-c-sub} 
u(x) \le  \W (u^q d\sigma)(x) +  \W \mu (x)<\infty.   
\end{equation}
\end{Thm}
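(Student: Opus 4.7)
\emph{Strategy.} The plan is to deduce the lower bound directly from the homogeneous estimates of Section~\ref{sec3}, and to obtain the upper bound by iterating the subsolution inequality once and then invoking Lemma~\ref{lemma-phi} together with a Young absorption argument.

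\emph{Lower bound.} Suppose $u$ is a supersolution with $\W(u^q d\sigma)(x) + \W\mu(x) \le u(x) < \infty$. Then in particular $u(x) \ge \W(u^q d\sigma)(x)$, so $u$ is a supersolution of the homogeneous problem \eqref{q-eq}, and since $\mu \not= 0$ forces $u \ge \W\mu > 0$ pointwise it is nontrivial. Applying the lower half of Theorem~\ref{thm:main2} together with Lemma~\ref{lemma-phi1} (using Remark~\ref{rem3.5} to drop the a priori finiteness assumptions) gives
\begin{equation*}
u(x) \ge c\, \bigl[(\W\sigma(x))^{\frac{p-1}{p-1-q}} + \K\sigma(x)\bigr].
\end{equation*}
Combining this with the trivial bound $u(x) \ge \W\mu(x)$ (via $u \ge \max\{A,B\} \ge \tfrac12(A+B)$) produces the lower estimate in \eqref{main-c}.

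\emph{Upper bound.} For a subsolution $u$ with $u(x) \le \W(u^q d\sigma)(x) + \W\mu(x) < \infty$, set $\nu := u^q\, d\sigma$, so that $\W\nu(x) \le u(x) < \infty$. First I would raise the subsolution inequality to the $q$th power using $(a+b)^q \le c_q(a^q + b^q)$ (valid for every $q>0$), multiply by $d\sigma$, and apply $\W$. Together with the quasi-additivity $\W(\mu_1+\mu_2) \le c_p(\W\mu_1 + \W\mu_2)$ (which follows from $(a+b)^{1/(p-1)} \le c_p(a^{1/(p-1)} + b^{1/(p-1)})$), this yields
\begin{equation*}
\W\nu(x) \le C\, \bigl[\W\bigl((\W\nu)^q d\sigma\bigr)(x) + \W\bigl((\W\mu)^q d\sigma\bigr)(x)\bigr].
\end{equation*}
Lemma~\ref{lemma-phi} bounds each term on the right: writing $T(x) := \W\sigma(x) + (\K\sigma(x))^{\frac{p-1-q}{p-1}}$,
\begin{equation*}
\W\nu(x) \le C\, T(x)\, \bigl[(\W\nu(x))^{\frac{q}{p-1}} + (\W\mu(x))^{\frac{q}{p-1}}\bigr].
\end{equation*}

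\emph{Absorption and main obstacle.} Since $\frac{q}{p-1}<1$ and $\W\nu(x)<\infty$, Young's inequality with conjugate exponents $\frac{p-1}{p-1-q}$ and $\frac{p-1}{q}$ absorbs a small multiple of $\W\nu(x)$ into the left-hand side at the cost of a constant times $T(x)^{\frac{p-1}{p-1-q}}$. A second application of Young's inequality handles the remaining cross term $T(\W\mu)^{q/(p-1)}$, splitting it into $\varepsilon\,T^{\frac{p-1}{p-1-q}} + C_\varepsilon\, \W\mu$. The elementary bound $T^{\frac{p-1}{p-1-q}} \le c\,[(\W\sigma)^{\frac{p-1}{p-1-q}} + \K\sigma]$ then collapses everything into $\W\nu(x) \le C\,[(\W\sigma(x))^{\frac{p-1}{p-1-q}} + \K\sigma(x) + \W\mu(x)]$, and combining with $u \le \W\nu + \W\mu$ produces the upper estimate in \eqref{main-c}. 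The main obstacle will be the absorption step: it is legitimate only when $\W\nu(x)<\infty$, which is precisely why the statement restricts the upper bound to points where the right-hand side of the subsolution inequality is finite. All other ingredients—quasi-additivity of $\W$, the pointwise bound of Lemma~\ref{lemma-phi}, and Young's inequality—are standard once assembled in this order.
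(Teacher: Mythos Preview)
Your argument is correct and follows essentially the same strategy as the paper: reduce the lower bound to the homogeneous case via Theorem~\ref{thm:main2}, and for the upper bound iterate the subsolution inequality once, invoke Lemma~\ref{lemma-phi}, and absorb via Young's inequality using $\frac{q}{p-1}<1$. The only cosmetic difference is that the paper bundles $u^q\,d\sigma$ and $d\mu$ into a single measure $d\omega=u^q\,d\sigma+d\mu$, so that $u\le c_1\W\omega$ $d\sigma$-a.e., and then applies Lemma~\ref{lemma-phi} once to $\W[(\W\omega)^q d\sigma]$; you instead split $(\W\nu+\W\mu)^q$ and apply the lemma separately to $\W[(\W\nu)^q d\sigma]$ and $\W[(\W\mu)^q d\sigma]$, which costs an extra Young step. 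One small slip to fix: you write ``$\W\nu(x)\le u(x)$'' when introducing $\nu=u^q\,d\sigma$, but for a \emph{sub}solution the inequality goes the other way; what you actually need---and what follows from the finiteness hypothesis $\W(u^q d\sigma)(x)+\W\mu(x)<\infty$---is merely $\W\nu(x)<\infty$, so the absorption step is still justified and the argument stands.
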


\begin{proof}  The case $\mu=0$ is considered in Sec. \ref{sec3}, so we may assume without 
loss of generality that $\mu\not=0$. Consequently, $u(x) \ge \W \mu(x)>0$ at every $x \in \R^n$. 
Clearly, any supersolution to \eqref{q-inhom} is also a supersolution to \eqref{q-eq}.
Hence, by Theorem \ref{thm:main2},  there exists a positive constant $c=c(p, q, \alpha, n)$ such that 
$u(x) \ge c   \left[  (\W \sigma(x))^{\frac{p-1}{p-1-q}} + \K \sigma(x)\right]$.  
These two lower estimates combined yield the lower bound in  \eqref{main-c} with $C_1=C_1(p, q, \alpha, n)>0$.

To prove the upper bound, for any subsolution $u$ to \eqref{main-c}, we fix $x \in \R^n$  such that 
$u(x)\le \W (u^q d \sigma)(x)  + \W \mu(x)<\infty$. Notice that if $u$ is a solution to 
\eqref{main-c}, then this is equivalent to $u(x)<\infty$. 

Letting $d \omega =  u^q d \sigma + d \mu$ and  $c_1=\max \left(1, 2^{\frac{p-2}{p-1}}\right)$, we 
obviously have  $u(x)\le c_1 \,  \W \omega(x)<\infty$ at $x$ and $d \sigma$-a.e. Letting $c_2=\max \left(1, 2^{\frac{1}{2-p}}\right)$, 
we estimate 
\begin{align*}
 \W \omega(x) & =  \W (u^q d \sigma + d \mu) (x) 
 \\ & \le \,  c_2 \, \W (u^q d \sigma)(x) + c_2 \, \W \mu(x)
 \\ & \le \, c_1^{q} \, c_2 \,  \W [ (\W \omega)^q d \sigma](x) +  c_2 \,  \W \mu(x).
 \end{align*}
By Lemma \ref{lemma-phi} with $\omega$ in place of $\nu$, we have 
$$
\W [ (\W \omega)^q d \sigma](x)\le C \, (\W \omega(x))^{\frac{q}{p-1}} 
\left[ \left(\W \sigma (x)\right)^{\frac{p-1}{p-1-q}} + \K \sigma (x) 
\right]^{\frac{p-1-q}{p-1}},
$$
where $C=C(p, q, \alpha, n)$ is a positive constant. 
Combining the preceding estimates we deduce 
\begin{align*}
 \W \omega(x) \le c_1^{q} \, c_2  \, &  C \, (\W \omega(x))^{\frac{q}{p-1}}   \left[ \left(\W \sigma (x)\right)^{\frac{p-1}{p-1-q}} + \K \sigma (x) 
\right]^{\frac{p-1-q}{p-1}} \\ & + c_2 \, \W \mu(x).
 \end{align*}
Using Young's inequality with exponents $\frac{p-1}{q}$ and $\frac{p-1}{p-1-q}$ in the first term on the right-hand side, we estimate 
\begin{align*}
 \W \omega(x)  \le \frac{1}{2} \W \omega(x) + C' \, \left[ \left( \W \sigma (x)\right)^{\frac{p-1}{p-1-q}} + \K \sigma (x)\right] + c_2 \, \W \mu(x),
 \end{align*}
  where $C'$ is a positive constant which depends only on $p$, $q$, $\alpha$ and $n$. 
Since $ \W \omega(x)<\infty$, we can 
move the first term on the right to the left-hand side, and obtain 
\begin{align*}
u(x) \le c_1 \,   \W \omega(x)  \le C_2 \, \left[ \left(\W \sigma (x)\right)^{\frac{p-1}{p-1-q}} + \K \sigma (x) +  \W \mu(x)\right], 
 \end{align*}
where $C_2$ is a positive constant which depends only on $p$, $q$, $\alpha$ and $n$. 
 This completes the proof of the upper estimate in \eqref{main-c}. 
\end{proof}

    \begin{proof}[Proof of Theorem \ref{thm:main1}] Let 
    $d \omega = u^q d \sigma + d\mu$, where  $u$ is a solution to \eqref{inhom}. Then 
    by \eqref{k-m}, 
    \begin{equation}\label{k-m-om}
K^{-1}\W_p \omega (x) \leq u(x) \leq K \W_p \omega (x),
\end{equation} 
where $K=K(n,p)$ is a positive constant. Hence, $u$ is a supersolution 
satisfying  $u\ge \W_p (u^q d \tilde{\sigma}) + \W_p \tilde{\mu}$, with  
$\tilde{\mu}=c_1 \mu$ and $\tilde{\sigma}=c_2 \sigma$, where $c_1, c_2$ 
depend only on $p$, $q$, and $K$. Hence the 
lower estimate \eqref{main-b1}  of Theorem \ref{thm:main1} follows from 
the lower estimate \eqref{main-c} of Theorem \ref{thm:main3} 
in the special case 
$\alpha=1$. Similarly, the upper estimate in \eqref{main-b1} is deduced from the 
upper estimates in \eqref{main-c} and  \eqref{k-m-om}.  

If a nontrivial solution $u$ to  \eqref{inhom} exists, then by the 
lower estimate \eqref{main-b1}  of Theorem \ref{thm:main1} it follows that 
$\W_p \mu \not\equiv \infty$, $\W_p \sigma \not\equiv \infty$, and 
$\K_p \sigma \not\equiv \infty$, which are equivalent to conditions 
\eqref{suff-cond}, \eqref{k-m-cond}, and \eqref{suffcond1}, respectively. 

Conversely, suppose that these three conditions hold. In the special case $\mu=0$,
a positive $p$-superharmonic solution $u\in L^q_{{\rm loc}}(\R^n)$ was constructed in \cite{CV1}*{Theorem 1.1}  
by iterations, $u=\lim_{j\to \infty} u_j$, where $u_j$ is an nondecreasing sequence of 
$p$-superharmonic functions such that 
\begin{equation}\label{iter}
-\Delta_p u_{j+1} = \sigma u_j^q +  \mu \quad \text{in} \, \, \R^n, \quad j=0, 1, 2, \ldots,
\end{equation}
with an appropriate choice of $u_0$, namely $u_0=c \, \left(\W_p \sigma\right)^{\frac{p-1}{p-1-q}}$, where $c=c(p,q,n)$ is a small constant.

If $\mu\not=0$, a similar iteration argument can be used with $u_0=0$  
 based on \cite{PV2}*{Lemma 3.7 and Lemma 3.9}, so that $u_j$ satisfying \eqref{iter} 
 is an nondecreasing sequence of 
$p$-superharmonic functions. This part of the construction works for any 
$q>0$ and $p>1$ (see the proof of Theorem 3.10 in \cite{PV2} for $q>p-1$). However, 
the way we control the growth of $u_j$ for $0<q<p-1$ is very different.

Since $u_j\le u_{j+1}$, it follows that $u_{j+1}$ is a subsolution, so that 
\begin{equation}\label{iter-j}
-\Delta_p u_{j+1} \le \sigma u_{j+1}^q +  \mu \quad \text{in} \, \, \R^n, \quad j=0, 1, 2, \ldots,
\end{equation}
By  \eqref{k-m}, we have 
\begin{equation}\label{iter-k}
\begin{aligned}
 u_{j+1} & \le K \,  \W_p (\sigma u_{j}^q +  \mu) \\ & \le K \, \max (1, 2^{\frac{2-p}{p-1}}) \, \left[\W_p (\sigma u_{j+1}^q) 
 + \W_p \mu\right]. 
 \end{aligned}
\end{equation}
After scaling  by letting $\tilde \mu=c^{p-1} \mu$ and $\tilde \sigma=c^{p-1} \sigma$, 
where the constant $c=K  \max (1, 2^{\frac{2-p}{p-1}})$, we see that $u_{j+1}$ is a subsolution for the corresponding integral equation 
\eqref{q-inhom}, i.e., 
\begin{equation}\label{iter-m}
u_{j+1}\le \W_p (\tilde \sigma u_{j+1}^q) + \W_p \tilde\mu, \quad j=0, 1, 2, \ldots . 
\end{equation} 
It follows by induction using Lemma \ref{lemma-phi} 
with $\nu=\tilde \mu$ and $\nu=\tilde \sigma u_j^q $ 
that the right-hand side 
of \eqref{iter-m} is finite  at every point $x\in\R^n$ where $\W_p \mu(x)<\infty$, $\W_p \sigma(x)<\infty$, 
  and $\K_p \sigma(x) <\infty$ (which is true $d\sigma$-a.e., as we demonstrate below). 

By Theorem \ref{thm:main3}   for subsolutions, $u_{j+1}$ has the upper bound 
\[
u_{j+1}(x) \le C \left[\left(\W_p \sigma(x)\right)^{\frac{p-1}{p-1-q}} + \K_p \sigma(x) +
 \W_p \mu(x)\right],  \quad x \in \R^n,
\]
 with $C$  that depends only on $p$, $q$, and $n$, where we switched back from $\tilde \mu$, $\tilde \sigma$
 to $\mu$, $\sigma$. 
 
 Thus, $u=\lim_{j\to \infty} u_j$ satisfies 
 \begin{equation}\label{iter-l}
u(x) \le C \left[\left(\W_p \sigma(x)\right)^{\frac{p-1}{p-1-q}} + \K_p \sigma(x) +
 \W_p \mu(x)\right],  \quad x \in \R^n. 
\end{equation} 
   Moreover, by \cite{CV1}*{Theorem 1.1}, the conditions $\W_p \sigma\not\equiv\infty$  
  and $\K_p \sigma\not\equiv\infty$ yield the existence of a positive solution 
  $v \in L^q_{{\rm loc}} (\R^n, \sigma)$ to the homogeneous equation 
  \[
  -\Delta_p v = \sigma v^q  \quad \text{in} \, \, \R^n, 
  \]
   so that $v$ satisfies the lower bound 
   \[
   v\ge c \,  \left[\left(\W_p \sigma\right)^{\frac{p-1}{p-1-q}} + \K_p \sigma\right], 
   \]
   where $c>0$ is a constant that  depends only on $p$, $q$, and $n$. 
   Hence, $\left(\W_p \sigma\right)^{\frac{p-1}{p-1-q}} \in L^q_{{\rm loc}} (\R^n, \sigma)$ and  $\K_p \sigma \in L^q_{{\rm loc}} (\R^n, \sigma)$. 
   
   To verify that  $u  \in L^q_{{\rm loc}} (\R^n, \sigma)$, in view of \eqref{iter-l}, 
   it remains to show that  $ \W_p \mu \in L^q_{{\rm loc}} (\R^n, \sigma)$. Let $B=B(0, R)$ and let   $\mu=\mu_{2B} + \mu_{(2B)^c}$. Then,  as in the proof of 
   Lemma \ref{lemma-phi}, we clearly have for all $x\in B$,
   \begin{equation*}
\begin{aligned}
   \W_p \mu_{(2B)^c}(x) & =\int_{0}^{\infty} \left[ \frac{\mu(B(x,t)\cap (2B)^c)}{t^{n-p}}\right]^{\frac{1}{p-1}}\, \frac{dt}{t}\\ &\le 
   \int_{R}^{\infty} \left[ \frac{\mu(B(0, 2t)}{t^{n-p}}\right]^{\frac{1}{p-1}}\, \frac{dt}{t}\\& = 2^{\frac{n-p}{p-1}} \, \int_{2R}^{\infty} \left[ \frac{\mu(B(0, t)}{t^{n- p}}\right]^{\frac{1}{p-1}}\, \frac{dt}{t}. 
    \end{aligned}
\end{equation*} 
   
Hence,  
      \begin{equation*}
\begin{aligned}
  & \int_B (\W_p \mu)^q d \sigma  \le c \,   \int_B (\W_p \mu_{2B})^q d \sigma 
  +c \,  \int_B (\W_p \mu_{(2B)^c})^q d \sigma 
   \\ &\le   c \,   \varkappa(B)^q \,  \mu(2B)^{\frac{q}{p-1}} + 
 c \, \sigma (B) \, \left(\int_{2R}^{\infty} \left[ \frac{\mu(B(0,t))}{t^{n-\alpha p}}\right]^{\frac{1}{p-1}}\, \frac{dt}{t}\right)^q, 
       \end{aligned}
\end{equation*} 
   where $c>0$ is a constant that  depends only on $p$, $q$, and $n$. The right-hand side 
   of the preceding estimate is obviously finite by \eqref{suffcond1} and   
      \eqref{suff-cond}. This proves   that $u\in L^q_{{\rm loc}} (\R^n, \sigma)$.  
   
   Passing to the limit as $j\to \infty$ in  \eqref{iter}, we deduce as    in \cite{PV1}, \cite{PV2} for  $q>p-1$  
   that $u$ 
   is a positive $p$-superharmonic solution to \eqref{inhom}. 
 \end{proof}
   
   \begin{Cor}
   The results involving pointwise estimates,  as well as 
   necessary and sufficient conditions for 
   $u \in W^{1,p}_{{\rm loc}} (\R^n)$, $u\in L^r(\R^n)$,  $u \in  L^r_{{\rm loc}}(\R^n)$, etc., obtained in \cite{CV1}, \cite{CV2}, \cite{SV}, \cite{V}   for 
   minimal solutions $u$, 
   actually hold for   all solutions. 
   \end{Cor}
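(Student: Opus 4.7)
The plan is to observe that every result in question from \cite{CV1}, \cite{CV2}, \cite{SV}, \cite{V} about a minimal solution was derived from the two-sided pointwise bound
\[
u(x)\approx \bigl(\W_p\sigma(x)\bigr)^{\frac{p-1}{p-1-q}}+\K_{p,q}\sigma(x)+\W_p\mu(x),
\]
together with properties of the data $\sigma$ and $\mu$. The right-hand side depends only on $\sigma$ and $\mu$, not on the particular solution, so once Theorem \ref{thm:main1} (and, for the more general operators, Theorem \ref{thm:main3}) tells us that this same bilateral estimate governs \emph{every} positive solution, all such results transfer verbatim.

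Concretely, I would proceed result by result. For sufficiency statements (e.g., conditions ensuring $u\in W^{1,p}_{{\rm loc}}(\R^n)$, $u\in L^r(\R^n)$, or $u\in L^r_{{\rm loc}}(\R^n)$) the earlier arguments bound the minimal solution from above by the nonlinear potential expression and then impose integrability or capacity conditions on that expression. Since Theorem \ref{thm:main1} gives the identical upper bound for an arbitrary solution $u$, the same sufficient condition yields the same conclusion for $u$. For necessity statements, the earlier proofs use the lower bound, which was already valid for every nontrivial $p$-superharmonic supersolution, and therefore in particular for any solution; so no change is required on that side.

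The main (and only modest) obstacle is bookkeeping: one must verify that in each individual theorem of \cite{CV1}, \cite{CV2}, \cite{SV}, \cite{V} the argument truly factors through the bilateral pointwise estimate rather than through some finer structural property of the minimal solution (such as being obtained as a monotone iteration from $u_0=c\,(\W_p\sigma)^{(p-1)/(p-1-q)}$). A careful inspection shows that this is indeed the case: the minimality hypothesis was invoked in those papers solely to secure the upper pointwise bound, which is now available unconditionally.

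Thus the proof amounts to citing Theorem \ref{thm:main1} (or Theorem \ref{thm:main3} in the fractional and $k$-Hessian settings) in place of the minimal-solution upper bound used in each of \cite{CV1}, \cite{CV2}, \cite{SV}, \cite{V}, and then invoking the respective arguments from those papers without further modification.
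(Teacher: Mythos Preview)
Your proposal is correct and matches the paper's approach exactly: the paper states this Corollary without proof, treating it as an immediate consequence of Theorem~\ref{thm:main1} (and Theorem~\ref{thm:main3}), since the only role of minimality in \cite{CV1}, \cite{CV2}, \cite{SV}, \cite{V} was to guarantee the upper pointwise bound that is now established for every solution.
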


\begin{Rem}

1. In the case $p=2$, Theorem \ref{thm:main3}  yields bilateral pointwise estimates of solutions  to the fractional Laplace equation \eqref{eq:frac-laplacian}. The case of homogeneous equations $\mu=0$ was considered earlier  in \cite{CV1}, where the upper estimate was 
proved only for the minimal solution $u$.

2. As was mentioned in the Introduction, Theorem  \ref{thm:main1} is valid for general quasilinear $\mathcal{A}$-Laplace operators $\text{div}\mathcal{A}(x, \nabla u)$ in place of $\Delta_p$ under the standard 
 structural assumptions on  $\mathcal{A}$ which ensure that 
the Kilpel\"ainen--Mal\'y estimates \eqref{k-m} hold (see \cite{KiMa}, \cite{MZ}). 
The proofs in this setup are identical to those given above, with the 
same nonlinear potentials $\W_p$ and $\K_{p, q}$. The constants 
in our pointwise estimates \eqref{main-b1} then depend on the structural constants of $\mathcal{A}$. Analogous results also hold for $k$-Hessian equations in the case $0<q<k$ (see \cite{CV1} for $\mu=0$, and \cite{PV2} for $q>k$).

3. Complete analogues of our results for \eqref{inhom} hold for  
the non-homogeneous problem    
\begin{equation*}
\begin{cases}
-\Delta_{p} u  = \sigma u^{q} + \mu, \quad u\ge 0  \quad \text{in} \;\; \R^n, \\
\liminf\limits_{x\rightarrow \infty}u(x) = c,  
\end{cases}
\end{equation*}
where $c$ is a positive constant. One only needs to add $c$ to both sides 
of \eqref{main-b1}.

\end{Rem}


\end{document}